\documentclass[11pt]{article}
\usepackage[margin=1in]{geometry}

\usepackage{amsmath,amsthm,amssymb,hyperref,bm,amsbsy}

\usepackage{xcolor}
\usepackage{graphicx}
\usepackage[normalem]{ulem}
\usepackage{caption}
\usepackage{subcaption}
\usepackage{url}
\usepackage{multirow}

\numberwithin{equation}{section}

\usepackage{algorithm}
\usepackage[noend]{algorithmic}

\newcommand{\algorithmicbreak}{\textbf{break}}
\newcommand{\BREAK}{\STATE \algorithmicbreak}

\hypersetup{
    colorlinks=true, 
    linkcolor=blue,
    citecolor=blue,
    filecolor=blue,
    urlcolor=blue,
}

\theoremstyle{plain}
\newtheorem{theorem}{Theorem}[section]
\newtheorem{proposition}[theorem]{Proposition}
\newtheorem{lemma}[theorem]{Lemma}

\theoremstyle{definition}
\newtheorem{assumption}{Assumption}

\theoremstyle{remark}
\newtheorem{remark}[theorem]{Remark}

\usepackage{bbm}   

\newcommand{\op}{\operatorname}

\newcommand{\R}{\mathbb{R}}

\newcommand{\bE}{\mathbb{E}}
\newcommand{\dd}{\mathrm{d}}

\title{Dual Approaches to Stochastic Control via SPDEs and the Pathwise Hopf Formula}
\author{Mathieu Lauri\`{e}re\thanks{Shanghai Center for Data Science; NYU-ECNU Institute of Mathematical Science, NYU Shanghai, Shanghai, People’s Republic of China. Email: mathieu.lauriere@nyu.edu}
\and Jiefei Yang\thanks{NYU-ECNU Institute of Mathematical Sciences, NYU Shanghai, Shanghai, People’s Republic of China. Email: jy5595@nyu.edu.}
}
\date{}

\begin{document}
\maketitle

\begin{abstract}
We develop dual approaches for continuous-time stochastic control problems, enabling the computation of robust dual bounds in high-dimensional state and control spaces. Building on the dual formulation proposed in [L. C. G. Rogers, \textit{SIAM Journal on Control and Optimization, 46 (2007), pp. 1116--1132}], we first formulate the inner optimization problem as a stochastic partial differential equation (SPDE); the expectation of its solution yields the dual bound. Curse-of-dimensionality-free methods are proposed based on the Pontryagin maximum principle and the generalized Hopf formula. In the process, we prove the generalized Hopf formula, first introduced as a conjecture in [Y. T. Chow, J. Darbon, S. Osher, and W. Yin, \textit{Journal of Computational Physics 387 (2019), pp. 376--409}], under mild conditions. Numerical experiments demonstrate that our dual approaches effectively complement primal methods, including the deep BSDE method for solving high-dimensional PDEs and the deep actor-critic method in reinforcement learning. 
\end{abstract}

{\bf Key words.} Numerical method, stochastic control, duality theory,  deep learning, generalized Hopf formula, deterministic control

{\bf MSC codes.} 49M29, 93E20, 65C05

\section{Introduction}

Continuous-time stochastic optimal control theory studies optimal decision making for dynamical systems whose evolution is affected by randomness. Such problems arise naturally in finance~\cite{pham2009continuous}, operations research~\cite{heyman2004stochastic}, engineering~\cite{athans1971role}, and the study of large interacting populations~\cite{carmona2018probabilistic}. Furthermore, stochastic optimal control has a fundamental connection with reinforcement learning (RL), as RL can be interpreted as a set of methods designed to solve stochastic control problems, typically formulated as Markov decision processes~\cite{recht2019tour}. Theoretically, the solution to a stochastic optimal control problem can be characterized through nonlinear partial differential equations (PDEs), most notably the Hamilton--Jacobi--Bellman (HJB) equation, or through forward--backward stochastic differential equations (FBSDEs). 

Recently, machine learning-based methods have emerged as promising tools to solve stochastic control problems, especially in high dimensions. These include the deep BSDE method~\cite{E2017deepbsde}, the deep backward dynamic programming scheme~\cite{hure2020deep}, and the actor--critic algorithm~\cite{zhou2024solving}. However, a significant challenge remains: quantifying the approximation and generalization errors of deep neural networks is often theoretically out of reach. Consequently, these primal methods typically provide \emph{only an upper bound on the optimal value}, leaving the gap to the true solution unknown. To address this issue, we develop a \emph{dual approach} to complement existing primal methods, enabling the computation of \emph{both lower and upper bounds} for the optimal value.

In the primal formulation, one seeks a control process $u = (u_t)_{t}$ that minimizes a cost functional to achieve the value: 
\begin{equation*}
    V = \inf_u \mathbb{E}\left[ \int_0^T r(X_t^u, u_t)\,\dd t + g(X_T^u) \right], 
\end{equation*}
where the state $X_t^u$ follows a stochastic differential equation (SDE) driven by $u$. Building on the framework developed by Rogers~\cite{rogers2007pathwise}, which itself follows earlier work by \cite{ha1992deterministic}, the dual formulation transforms this minimization into a maximization problem over a set of martingales. While Rogers originally investigated discrete-time settings, subsequent studies extended these results to continuous-time and path-dependent cases~\cite{diehl2017stochastic, henry2016dual}. Specifically, the optimal value $V$ admits the representation:
\begin{equation} \label{eq:dual-intro}
    V = \sup_{M} \mathbb{E}\left[ \inf_{u} \left\{ \int_0^T r(X_t^u, u_t)\,\dd t + g(X_T^u) - M_T^u \right\} \right],
\end{equation}
where $M^u$ is a zero-mean martingale that depends on the controlled state $X^u$ (and thus implicitly on $u$). 

This primal-dual structure naturally yields error bounds: any suboptimal control from a primal method generates an upper bound on $V$, while the dual formulation provides a lower bound. Within the expectation in \eqref{eq:dual-intro}, the inner minimization degenerates into a pathwise optimal control problem. Together, these bounds offer a practical and reliable way to assess the quality of approximate solutions in high-dimensional settings, provided one can solve the dual problem efficiently, which is the main goal of this work.

\vskip 2pt
\noindent\textbf{Related works.} The primal--dual approach has a rich history in the literature, originating from dual formulations for optimal stopping problems~\cite{rogers2002monte, haugh2004pricing}. While this framework has been extensively developed for optimal stopping~\cite{belomestny2009true, desai2012pathwise, belomestny2019optimal, yang2024deep, bayer2025primal, ye2025deepmartingale}, its application to stochastic optimal control remains less explored, particularly regarding numerical implementation. The primary difficulty lies in the complexity of the inner problem: in optimal stopping, once a martingale is fixed, the inner problem reduces to finding a pathwise maximum. In contrast, stochastic optimal control requires solving a deterministic control problem over a potentially high-dimensional action space for every sampled path. Without an analytical solution, performing a separate numerical optimization for each path is computationally prohibitive.

Recent efforts have sought to mitigate this complexity in specific settings. In discrete-time or Markov decision processes, \cite{desai2012bounds} identified tractable cases, while \cite{belomestny2024primal} recently proposed a regression-based approach. Iterative methods and adversarial formulations have also emerged, such as the dual value estimates in~\cite{chen2024information}, the min--max game approach in~\cite{chen2025adversarial}, and dual value iteration for infinite-horizon problems~\cite{belomestny2026uvip}. In the continuous-time domain, existing work has largely focused on specific financial applications, such as Credit Valuation Adjustment (CVA)~\cite{henry2016dual, henry2017deep}, where the inner problem admits an analytical solution via an ODE. However, a general, curse-of-dimensionality-free numerical framework for continuous-time dual stochastic optimal control problems, independent of analytical tractability, is still lacking.

\vskip 2pt
\noindent\textbf{Main contributions.} 
This paper focuses on the efficient computation of the inner pathwise control problem in general settings, particularly where analytical solutions are unavailable or the control space is too large for exhaustive search. Our first contribution is a characterization of the value function for this inner problem via a stochastic partial differential equation (SPDE) in Stratonovich form (see Theorem~\ref{thm:spde}). 
To solve this SPDE numerically, we utilize a suboptimal martingale obtained from a primal approach and apply a Wong--Zakai type approximation. This reduces the problem to solving a first-order Hamilton--Jacobi (HJ) equation. Building on this, we propose two curse-of-dimensionality-free dual algorithms.
The first is based on Pontryagin's maximum principle for deterministic optimal control, leveraging its fundamental connection to the HJ equation (see \S~\ref{sec:pontryagin-method}).
The second utilizes the generalized Hopf formula, originally introduced as a conjecture in~\cite{chow2019algorithm}  (see \S~\ref{sec:hopf-method}). 
Notably, we provide a rigorous proof of the generalized Hopf formula under the assumptions of the maximum principle (see Theorem~\ref{thm:generalized-hopf}), a result that, to our knowledge, has not previously been proved in the literature. This ensures a true dual lower bound even when the inner optimization is not solved to exact optimality. Numerical experiments demonstrate that these approaches efficiently compute robust bounds for high-dimensional stochastic control problems, effectively complementing modern primal methods.

\vskip 2pt
\noindent\textbf{Organization of the paper.}
Section~\ref{sec:problem} states the problem and its dual formulation. In Section~\ref{sec:spde-inner-prob}, we derive the SPDE satisfied by the value function of the inner pathwise problem. In Section~\ref{sec:dual-algorithm}, we present the algorithms and the proof for the generalized Hopf formula. Numerical experiments are included in Section~\ref{sec:numerical-examples}. Finally, we conclude the paper with possible future directions in Section~\ref{sec:conclusion}. 

\section{Primal and dual formulations for stochastic control problems} \label{sec:problem}
Let $T>0$ be a finite time horizon. Let $\R^d$ be the state space and $U \subset \R^k$ be a non-empty separable metric space representing the action space. We consider a probability space $(\Omega, \mathcal{F}, \mathbb{P})$ equipped with a standard $d$-dimensional Brownian motion $W$ and with a natural filtration $\mathbb{F} = (\mathcal{F}_t)_{0\le t\le T}$. The finite-horizon continuous-time stochastic control problem is  
\begin{equation} \label{eq:sc-prob}
    \inf_{u} J[u] = \inf_{u} \bE\left[ \int_0^T r(X_t^u, u_t)\,\dd t + g(X_T^u) \right]
\end{equation}
subject to the controlled state dynamic 
\begin{equation} \label{eq:ito-dynamic}
    dX_t^u = b(X_t^u, u_t) \,\dd t + \sigma(X_t^u)\,\dd W_t, \quad X_0^u = x_0,
\end{equation}
where $x_0 \in \R^d$, $W_t$ denotes a $d$-dimensional Brownian motion, $b$ and $\sigma$ are given coefficient functions, and the control $u$ is taken from the admissible control set $\mathcal{U} := \{u:[0,T]\times \Omega \to U \subset \R^{k} \mid u \text{ is } \mathbb{F}\text{-adapted}\}$. 
The function $r(x, u): \R^d \times U \to \R$ and $g(x):\R^d \to \R$ can be regarded as the running and terminal cost, respectively. We define the value function of the stochastic optimal control problem by 
\begin{equation*}
    V(t, x) = \inf_u \bE\left[ \int_t^T r(X_s^{t, x, u}, u_s)\,\dd s + g(X_T^{t,x,u}) \right],
\end{equation*}
where $(X_s^{t, x, u})_{t\le s\le T}$ represents the state process starting from $X_t^{t,x,u} = x$. By definition, we have $V(0, x_0) = \inf_u J[u]$. 

Throughout the paper, we regard Problem~\eqref{eq:sc-prob} as the primal problem. Any suboptimal control produced by a numerical method yields an upper bound for $V(0,x_0)$. To complement this, we use the dual problem to compute a corresponding lower bound. Intuitively, instead of taking an infimum over all admissible controls, the dual formulation characterizes the optimal value through a supremum over a class of martingales. A well-approximated function in this dual class is therefore expected to produce a tight lower bound. 

This dual formulation was initially proposed in~\cite{rogers2007pathwise} for discrete-time controlled Markov processes and later extended to continuous time: Diehl et al. \cite{diehl2017stochastic} developed an extension using rough path analysis, while Henry-Labord\`{e}re et al. \cite{henry2016dual} proposed an alternative continuous-time approach based on a different methodology. Under the assumptions stated in Assumption \ref{assu:dual-sc-prob}, we summarize the dual formulation in Proposition \ref{prop:dual-formulation-sc}. 

\begin{assumption} \label{assu:dual-sc-prob}
    Assume that 
    \begin{enumerate}
        \item $b, r$ are uniformly bounded and continuous in $u$. 
        \item $b, \sigma, r$ are uniformly Lipschitz in $x$. 
        \item $g$ is Lipschitz continuous. 
    \end{enumerate}
\end{assumption}

\begin{proposition} \label{prop:dual-formulation-sc}
Under Assumption \ref{assu:dual-sc-prob}, the value function has a dual representation 
\begin{equation} \label{eq:dual-thm}
    V(t, x) = \sup_{h\in C_b^{1,2}}\bE\left[ \inf_{u \in \mathbb{U}} \left\{ \int_t^T r(X_s^{t, x, u}, u_s)\,\dd s + g(X_T^{t,x,u}) - \int_t^T \nabla_x h(s, X_s^{t,x,u})^\top \sigma(X_s^{t,x,u})\,\dd W_s \right\} \right],
\end{equation}
where $\mathbb{U}:= \{u:[0,T]\to U\mid u(\cdot)\text{ is measurable}\}$ and the supremum is achieved at $h^* = V$ provided $V\in C_b^{1,2}$. 
\end{proposition}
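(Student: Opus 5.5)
Two inequalities are needed: weak duality (every $h$ gives a lower bound on $V(t,x)$) and attainment at $h^*=V$ when $V\in C_b^{1,2}$. A preliminary issue comes first. The inner infimum runs over deterministic measurable paths $u\in\mathbb{U}$, while the stochastic integral $\int_t^T \nabla_x h(s,X_s^{t,x,u})^\top\sigma(X_s^{t,x,u})\,\dd W_s$ is defined path by path only up to null sets that depend on $u$. I will therefore interpret the pathwise functional through It\^o's formula:
\[
h(T,X_T^{t,x,u}) - h(t,x) - \int_t^T (\partial_s h + \mathcal{L}^{u_s} h)(s,X_s^{t,x,u})\,\dd s ,
\qquad
\mathcal{L}^{v} h = b(\cdot,v)^\top\nabla_x h + \tfrac12 \op{tr}(\sigma\sigma^\top \nabla_x^2 h).
\]
This expression is defined for every $\omega$ and is continuous in $u$ along the solution map. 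Equivalently, one can use a rough-path or Stratonovich version, in the spirit of \cite{diehl2017stochastic}. This is also the form in which the SPDE of the next section will enter.

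\emph{Weak duality.} Fix $h\in C_b^{1,2}$ and an adapted control $\tilde u\in\mathcal{U}$. For each $\omega$, the path $\tilde u(\omega)$ is an element of $\mathbb{U}$, so the inner infimum is at most the functional evaluated at $\tilde u(\omega)$. Taking expectations and using that $\int \nabla_x h^\top\sigma\,\dd W$ is a true zero-mean martingale for adapted $\tilde u$ (bounded $\nabla_x h$, linear growth of $\sigma$, Assumption~\ref{assu:dual-sc-prob}) gives $\bE[\inf_u\{\cdots\}]\le J[\tilde u]$. The one point to verify is that the pathwise-defined term agrees a.s.\ with the It\^o integral when an adapted control is plugged in; this follows from It\^o's formula applied to $h(s,X_s^{\tilde u})$. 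Taking the infimum over $\tilde u$ gives $\sup_h \bE[\cdots]\le V(t,x)$. Measurability of the inner infimum is handled by restricting to a countable dense family of controls: $U$ is separable, the functional is continuous in $u$ thanks to the boundedness and continuity of $b,r$ and the Lipschitz bounds, and the solution map is continuous.

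\emph{Attainment.} Take $h=V\in C_b^{1,2}$. Then $V$ solves the HJB equation
\[
\partial_s V + \inf_{v\in U}\{\mathcal{L}^v V + r(\cdot,v)\}=0, \qquad V(T,\cdot)=g .
\]
Substituting the It\^o-formula form of the martingale term, the pathwise functional at any $u\in\mathbb{U}$ equals
\[
V(t,x) + \int_t^T \bigl(r + \partial_s V + \mathcal{L}^{u_s}V\bigr)(s,X_s^{t,x,u})\,\dd s \;\ge\; V(t,x),
\]
since the integrand is nonnegative by the HJB inequality. Hence the inner infimum is $\ge V(t,x)$ for every $\omega$, so the expectation is $\ge V(t,x)$. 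Combined with weak duality, this gives equality, and the supremum is attained at $h^*=V$.

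The main obstacle is making the pathwise stochastic integral rigorous uniformly over all deterministic $u\in\mathbb{U}$ simultaneously. A separate It\^o integral for each $u$ leaves uncountably many null sets. I would resolve this by taking the It\^o-formula rewriting above as the definition, which is legitimate under $h\in C_b^{1,2}$ because it needs no stochastic integral with a $u$-dependent integrand except through $X^u$. I would then check that $X^{t,x,u}$ can be chosen as a jointly measurable, $u$-continuous flow. Continuity in $u$ fails in the strong topology when $\sigma$ depends on $x$, so I would use a Doss--Sussmann or rough-path construction of the flow, as in \cite{diehl2017stochastic}. Everything else is a verification argument.
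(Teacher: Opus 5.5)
The paper does not prove this proposition; it quotes it from \cite{rogers2007pathwise,diehl2017stochastic,henry2016dual}. Your argument is a correct, self-contained alternative in outline, and it has three parts:
\begin{itemize}
\item You make the penalty pathwise through its It\^o-formula expression. This is the continuous-time analogue of Rogers' penalty $h(X_{k+1})-\bE[h(X_{k+1})\mid X_k,u_k]$.
\item You get weak duality by inserting an adapted control path by path.
\item You get attainment by verification from the HJB inequality $\partial_s V+\mathcal{L}^v V+r(\cdot,v)\ge 0$ for all $v\in U$. For $V\in C_b^{1,2}$ this needs only the easy half of the dynamic programming principle with constant controls.
\end{itemize}
Compared with the citation, your route gives a sharper conclusion. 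For $h=V$ the inner infimum is $\ge V(t,x)$ pathwise and has mean $\le V(t,x)$, so it equals $V(t,x)$ almost surely: the optimal penalty has zero variance.

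In the weak-duality step, It\^o's formula is not the only thing to check. You first need the pathwise state evaluated at $v=\tilde u(\omega)$ to coincide a.s.\ with the It\^o solution $X^{\tilde u}$. Only then does It\^o's formula identify your pathwise penalty with the stochastic integral.

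The step that does not go through under the stated hypotheses is the flow construction. Rough-path theory needs $\sigma\in\mathrm{Lip}^\gamma$ with $\gamma>2$ (e.g.\ $C^3_b$, with extra care for linear growth) to give a unique, continuous solution map. Doss--Sussmann in several dimensions also needs the columns of $\sigma$ to commute. Assumption~\ref{assu:dual-sc-prob} gives only Lipschitz $\sigma$.

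You do not need a flow; the countable family you already use for measurability is enough.
\begin{itemize}
\item Read the inner infimum as an essential infimum. By continuity of $b,r$ in $u$ and $L^2$-stability of the SDE in its drift, it equals a.s.\ the infimum over a countable family $\mathbb{U}_0$. Take $\mathbb{U}_0$ to be piecewise-constant paths with dyadic jump times and values in a countable dense $U_0\subset U$.
\item Attainment then holds a.s.\ after discarding countably many null sets.
\item For weak duality, take simple adapted $\tilde u$ on a dyadic grid with values in $U_0$. On each event $\{\tilde u=v\}$ with $v\in\mathbb{U}_0$, induction over the grid gives $X^{\tilde u}=X^{v}$ a.s., using pathwise uniqueness localised on $\mathcal{F}_{t_i}$-measurable events. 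Hence the inner infimum is a.s.\ at most the functional at $\tilde u$, whose mean is $J[\tilde u]$.
\item Density of such controls and continuity of $J$ under Assumption~\ref{assu:dual-sc-prob} then give $\sup_h\bE[\cdots]\le V(t,x)$.
\end{itemize}
This keeps the whole argument within Assumption~\ref{assu:dual-sc-prob}.
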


\begin{remark}
Compare~\eqref{eq:dual-thm} with~\eqref{eq:dual-intro}. We can see that 
\begin{equation*}
    M_T^u = \int_0^T \nabla_x h(t, X_t^u)^\top \sigma(X_t^u)\,\dd W_t. 
\end{equation*}
By the martingale representation theorem, a zero-mean martingale $M$ can be represented by an It\^{o} integral. Then optimizing over all zero-mean martingales is equivalent to optimizing over all functions $h$ inside the It\^{o} integral. 
\end{remark}

A large number of numerical methods focus on solving the primal problem directly via direct discretization and optimization, dynamic programming, the HJB equation, or FBSDE~\cite{han2016deep, E2017deepbsde, bachouch2022deep, zhou2024solving, li2024neural}, see a comprehensive overview on recent developments in~\cite{hu2024recent}. The HJB equation for the value function $V(t,x)$ is 
\begin{equation*}
\left\{\begin{aligned}
        \partial_t V(t,x) + \inf_{u\in U} G\left(t,x,u, \nabla_x V(t,x), \op{Hess}_x V(t,x)\right) &= 0, \\
        V(T,x) &= g(x), 
    \end{aligned}\right.
\end{equation*}
where $G(t,x,u,p,P) := \frac{1}{2}\op{Tr}(\sigma(x)\sigma(x)^\top P) + \langle p, b(x,u) \rangle + r(x,u)$, see~\cite{yong1999stochastic}. When only the drift coefficient is controlled, the HJB equation becomes a semilinear PDE: 
\begin{equation} \label{eq:nonlinear-pde}
    \partial_t V(t,x) + \frac{1}{2}\op{Tr}\left(\sigma(x)\sigma(x)^\top \op{Hess}_x V(t,x)\right) + f(x, \nabla_x V(t,x)) = 0, 
\end{equation}
where the nonlinearity $f(x, p) := \inf_{u\in U}\left\{ \langle p, b(x,u)\rangle + r(x,u) \right\}$. Thus, by the nonlinear Feynman-Kac formula, the value function satisfies the following FBSDE system 
\begin{equation*}
\left\{\begin{aligned}
    \tilde{X}_t &= \sigma(\tilde{X}_t) \,\dd W_t, \quad \tilde{X}_0 = x_0, \\
    V(t, \tilde{X}_t) &= g(\tilde{X}_T) + \int_t^T f(\tilde{X}_s, \nabla_x V(s, \tilde{X}_s))\,\dd s - \int_t^T Z_s\,\dd W_s, 
\end{aligned}\right. 
\end{equation*}
where $Z_s = \nabla_xV(s, \tilde{X}_s)^\top \sigma(\tilde{X}_s)$. 
Primal methods typically construct an approximate value function $\hat{V}(t,x)$ and/or an approximate optimal control $\hat{u}^*_t$ using neural networks or linear combinations of basis functions. 

We now describe how these primal approaches can be complemented by a dual method based on the dual formulation presented in Proposition \ref{prop:dual-formulation-sc}.  Our objective is to compute robust bounds for the primal problem~\eqref{eq:sc-prob} of the form  
\begin{equation*}
    V_{\text{lower}} \le V(0, x_0) \le V_{\text{upper}}:= J[\hat{u}^*].
\end{equation*}
Let $z_\theta(t,x) \approx \nabla_x V(t,x)^\top \sigma(x)$ be an approximation of the $Z$-component of the BSDE solution, where $\theta$ represents the parameters of the chosen approximation class\footnote{The function $z_\theta(t,x)$ serves as an approximation of the $Z$-component of the BSDE associated with the primal problem. This quantity can be directly obtained using the deep BSDE method~\cite{E2017deepbsde} or deep actor-critic method~\cite{zhou2024solving}, in which case $z_\theta$ is a neural network with parameters $\theta$. In settings where the Z-component is not explicitly available, it can instead be recovered from the value function approximation by setting $z_\theta(t, x) = \nabla_x \hat{V}_\theta(t,x)^\top \sigma(x)$,  where $\hat{V}_\theta(t,x)$ denotes a parameterized approximation of the value function.}. The approximation $z_\theta$ is useful to construct an almost optimal martingale by taking 
\begin{equation*}
    \hat{M}_T^u = \int_0^T z_\theta(t, X_t^u)\,\dd W_t. 
\end{equation*}
Then using Proposition \ref{prop:dual-formulation-sc} we obtain a lower bound 
\begin{equation} \label{eq:lower-bound}
    V_{\text{lower}} = \bE\left[ v_\omega(0,x_0) \right], 
\end{equation}
where for each $\omega \in \Omega$, 
\begin{equation} \label{eq:inner-optim-prob}
    v_\omega(t,x) = \inf_{u \in \mathbb{U}} \left\{ \int_t^T r(X_s^{t, x, u}(\omega), u_s)\,\dd s + g(X_T^{t,x,u}(\omega)) - \int_t^T z_\theta(s, X_s^{t,x,u}(\omega))\,\dd W_s(\omega) \right\}. 
\end{equation}
We refer to~\eqref{eq:inner-optim-prob} as the inner optimization problem. 

\section{SPDE for the inner optimization problem} \label{sec:spde-inner-prob}
In this section, we study the inner optimization problem~\eqref{eq:inner-optim-prob} via SPDE. We first present the dynamic programming equation in Lemma \ref{lem:dp-inner-optim} and then prove that $v_\omega$ satisfies a stochastic state-dependent Hamilton-Jacobi equation in Theorem \ref{thm:spde}. 

\begin{lemma} \label{lem:dp-inner-optim}
For any $\omega\in \Omega$, $(t,x)\in [0,T)\times \R^d$, $\hat{t}\in [t, T]$, 
\begin{equation} \label{eq:DP-pathwise}
    v_\omega(t,x) = \inf_{u\in \mathbb{U}} \left\{ \int_t^{\hat{t}}r(X_s^{t,x,u}(\omega), u_s)\,\dd s - \int_t^{\hat{t}} z_\theta(s,X_s^{t,x,u}(\omega))\,\dd W_s(\omega) + v_\omega(\hat{t}, X_{\hat{t}}^{t,x,u}(\omega)) \right\}.
\end{equation}
\end{lemma}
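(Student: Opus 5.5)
We prove \eqref{eq:DP-pathwise} by the standard two-inequality argument for deterministic dynamic programming, applied for a fixed $\omega$. The only nonstandard ingredient is the stochastic integral $\int z_\theta\,\dd W$, which has to make sense pathwise.

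\emph{Pathwise meaning of the objective.} Before anything else we fix how the integral in \eqref{eq:inner-optim-prob} is interpreted for a deterministic control $u\in\mathbb{U}$. Since $u$ is deterministic, $X^{t,x,u}$ is a genuine SDE solution. We choose a version of the flow $(s,t,x,u)\mapsto X^{t,x,u}_s(\omega)$, together with the integral $\int_t^{\cdot} z_\theta(s,X_s^{t,x,u})\,\dd W_s$, that satisfies two requirements for every $\omega$ in a full-measure set.
\begin{itemize}
\item[(i)] \emph{Flow property:} $X^{t,x,u}_s=X^{\hat t,X^{t,x,u}_{\hat t},u}_s$ for $s\ge\hat t$.
\item[(ii)] \emph{Additivity:} $\int_t^T=\int_t^{\hat t}+\int_{\hat t}^T$, where the second integral coincides with the one generated from the restarted state $X^{\hat t,y,u}$ evaluated at $y=X_{\hat t}^{t,x,u}(\omega)$.
\end{itemize}
This requires a pathwise construction: a rough-path lift of $W$ as in \cite{diehl2017stochastic}, or equivalently the Stratonovich/Wong--Zakai reading that leads to Theorem~\ref{thm:spde}. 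With it, $X$ and the integral become deterministic functionals of the fixed path $W(\omega)$, continuous in the initial data, and both the flow property and additivity hold identically in $\omega$. We regard establishing this consistency as the main obstacle. Once it holds, the argument below is purely deterministic.

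\emph{Inequality ``$\ge$''.} Take any $u\in\mathbb{U}$ and let $y=X_{\hat t}^{t,x,u}(\omega)$. By the flow property and additivity, the cost of $u$ on $[t,T]$ equals
\begin{equation*}
\int_t^{\hat t} r\,\dd s-\int_t^{\hat t} z_\theta\,\dd W+\Big(\text{cost of }u|_{[\hat t,T]}\text{ started from }(\hat t,y)\Big).
\end{equation*}
The bracketed term is at least $v_\omega(\hat t,y)$ by the definition of $v_\omega$. Taking the infimum over $u$ gives $v_\omega(t,x)\ge$ the right-hand side of \eqref{eq:DP-pathwise}.

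\emph{Inequality ``$\le$''.} Fix $\varepsilon>0$ and any $u^1\in\mathbb{U}$ on $[t,\hat t]$, and set $y=X_{\hat t}^{t,x,u^1}(\omega)$. By the definition of $v_\omega$, there exists $u^2\in\mathbb{U}$ on $[\hat t,T]$ whose cost from $(\hat t,y)$ is at most $v_\omega(\hat t,y)+\varepsilon$. Here the control space is deterministic and the point $y$ is fixed once $\omega$ is fixed, so no measurable-selection issue arises. Define the concatenation $u=u^1\mathbbm{1}_{[t,\hat t)}+u^2\mathbbm{1}_{[\hat t,T]}$, which is again measurable and hence lies in $\mathbb{U}$. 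By the flow property and additivity, its total cost is at most
\begin{equation*}
\int_t^{\hat t} r-\int_t^{\hat t} z_\theta\,\dd W+v_\omega(\hat t,y)+\varepsilon.
\end{equation*}
Since the total cost of $u$ bounds $v_\omega(t,x)$ from above, taking the infimum over $u^1$ and letting $\varepsilon\to0$ gives the reverse inequality.

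Finiteness of $v_\omega$ follows from the boundedness of $r$ and $b$, the Lipschitz continuity of $g$, and the continuity of the pathwise integral in the control, as guaranteed by Assumption~\ref{assu:dual-sc-prob}.
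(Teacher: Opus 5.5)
The paper states this lemma without proof. It treats it as the standard dynamic programming principle for the pathwise problem and uses it directly in the proof of Theorem~\ref{thm:spde}. Your restriction/concatenation argument is the standard argument the paper implicitly relies on, and both inequalities are correct once the flow property (i) and additivity (ii) hold simultaneously for all $u\in\mathbb{U}$ at the fixed $\omega$. Your observation that, with $\omega$ fixed, the $\varepsilon$-optimal continuation $u^2$ is chosen for a single point $y$ is also right: this is exactly why no measurable selection is needed, unlike in the stochastic DPP.

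You are right that the real content lies in (i)--(ii), which the paper passes over. For each deterministic $u$, the It\^o integral in \eqref{eq:inner-optim-prob} is defined only up to a $u$-dependent null set. Since $\mathbb{U}$ is uncountable, $v_\omega$ is not even well defined without a pathwise construction such as the rough-path one of \cite{diehl2017stochastic}.

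Some small corrections to how you invoke that construction:
\begin{enumerate}
\item To reproduce the It\^o objective you must use the It\^o rough-path lift. The Stratonovich/Wong--Zakai reading is equivalent only after adding the correction terms: $-\tfrac12(\nabla\sigma:\sigma)$ in the drift and $+\tfrac12\op{Tr}(\sigma J_x z_\theta)$ in the running cost, as in \eqref{eq:doc-prob}.
\item RDE well-posedness and the rough integral of $z_\theta(s,X_s)$ require more regularity than Assumption~\ref{assu:dual-sc-prob} provides. Typical requirements are $\sigma\in C^3_b$, and $z_\theta$ of class $C^2_b$ in $x$ with enough time regularity. Relatedly, your identity holds off a null set (where the lift exists) rather than for literally every $\omega$ as the lemma is phrased; this weakening is harmless.
\item Finiteness of $v_\omega$ is not actually needed. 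Each fixed control has finite cost, and if $v_\omega(\hat t,y)=-\infty$ you simply replace $v_\omega(\hat t,y)+\varepsilon$ by an arbitrary $-K$. Where finiteness does hold, it comes from a priori RDE bounds that are uniform in $u$ under a bounded drift. It does not come from continuity in the control, since $\mathbb{U}$ is not compact.
\end{enumerate}
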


\begin{theorem} \label{thm:spde}
For any $\omega \in \Omega$, assume $v_\omega, z_\theta \in C^1([0,T]\times \R^d)$ and $\sigma \in C^1(\R^d)$. Then $v_\omega(t,x)$ satisfies the SPDE 
\begin{equation} \label{eq:spde-form}
\begin{aligned}
    &\quad \dd v + \inf_{u\in U} \left\{ \langle \nabla_x v, b(x,u) \rangle  + r(x,u)  \right\}\,\dd t + \left(\nabla_x v^\top \sigma(x) - z_\theta(t,x)\right)\circ \dd W_t(\omega) \\
    &+ \left\{\frac{1}{2}\op{Tr}\left( \sigma(x)J_xz_\theta(t,x)\right) - \langle \nabla_xv, \frac{1}{2}(\nabla \sigma:\sigma)(x) \rangle \right\}\,\dd t = 0, 
\end{aligned}
\end{equation}
with terminal condition $v(T,x) = g(x)$, where $J_x z$ is the Jacobian matrix of $z$ with respect to $x$, and $(\nabla\sigma:\sigma)_i = \sum_{j=1}^{d'}\sum_{k=1}^d (\partial_{x_k}\sigma_{ij})\sigma_{kj}$. 
\end{theorem}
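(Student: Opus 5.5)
We plan to derive \eqref{eq:spde-form} from the dynamic programming principle of Lemma~\ref{lem:dp-inner-optim}, applied on a short interval $[t,t+h]$, together with a first-order chain rule in Stratonovich form. The first step is to rewrite the pathwise controlled dynamics \eqref{eq:ito-dynamic} and the It\^{o} integral in \eqref{eq:inner-optim-prob} in Stratonovich form, since only Stratonovich calculus obeys the classical chain rule for $C^1$ functions. We have $\sigma(X)\,\dd W = \sigma(X)\circ \dd W - \tfrac12(\nabla\sigma:\sigma)(X)\,\dd t$ and
\begin{equation*}
\int z_\theta(s,X_s)\,\dd W_s = \int z_\theta(s,X_s)\circ \dd W_s - \frac12\int \op{Tr}\big(\sigma(X_s) J_x z_\theta(s,X_s)\big)\,\dd s .
\end{equation*}
The correction terms come from the quadratic covariation $\langle z_\theta(\cdot,X), W\rangle$ and $\langle\sigma(X),W\rangle$, computed using $\dd\langle X, W\rangle = \sigma(X)\,\dd t$. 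Here the assumptions $z_\theta,\sigma\in C^1$ are exactly what is needed, and they explain the two extra drift terms in \eqref{eq:spde-form}.

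Next, we fix $\omega$ and $(t,x)$ and take a constant control $u_s\equiv u\in U$ on $[t,t+h]$ in \eqref{eq:DP-pathwise}. This gives $v_\omega(t,x)\le \int_t^{t+h} r\,\dd s - \int_t^{t+h} z_\theta\,\dd W + v_\omega(t+h,X_{t+h})$. We then expand $v_\omega(t+h,X_{t+h})-v_\omega(t,x)$ by the Stratonovich chain rule (valid for $v_\omega\in C^1$ along a semimartingale written in Stratonovich form):
\begin{equation*}
\int_t^{t+h}\partial_s v\,\dd s+\int_t^{t+h}\nabla_x v^\top\Big(b-\tfrac12\nabla\sigma:\sigma\Big)\dd s+\int_t^{t+h}\nabla_x v^\top\sigma\circ \dd W_s .
\end{equation*}
Substituting this and the Stratonovich form of the $z_\theta$ integral, then letting $h\downarrow 0$ in the sense of differentials, yields the inequality ``$\ge$'' version of \eqref{eq:spde-form} with $\inf_u$ replaced by an evaluation at each $u$. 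Taking the infimum over $u$ then gives one inequality.

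For the reverse inequality, we take $\varepsilon$-optimal controls $u^{h}$ in \eqref{eq:DP-pathwise} on $[t,t+h]$. We apply the same expansion, using $\langle\nabla_x v,b(X_s,u_s)\rangle + r(X_s,u_s)\ge \inf_{u}\{\cdots\}$ evaluated at $X_s$, and we use continuity in $x$ to replace $X_s$ by $x$ up to $o(1)$. Both inequalities together give the equation in integrated form:
\begin{equation*}
v(t,x) = g(x)+\int_t^T\Big[\inf_u\{\cdots\}+\tfrac12\op{Tr}(\sigma J_xz_\theta)-\langle\nabla_x v,\tfrac12\nabla\sigma:\sigma\rangle\Big]\dd s+\int_t^T(\nabla_xv^\top\sigma-z_\theta)\circ \dd W_s .
\end{equation*}
The terminal condition follows directly from the definition \eqref{eq:inner-optim-prob}.

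The main obstacle is that the statement is pathwise in $\omega$, whereas stochastic integrals are defined only almost surely and are not continuous in the path. Dividing by $h$ is meaningless because $W$ is not differentiable. We would therefore interpret the SPDE in integrated (Stratonovich) form. To justify it, we would use the fact that, after subtracting the Stratonovich terms, the remainder is of bounded variation: a Wong--Zakai or rough-path style argument on the integrated form, with the identity holding for a.e.\ $\omega$ uniformly over a countable dense set of $(t,x)$ and extended by continuity. We expect that controlling the error terms uniformly in $u$, using the boundedness of $b$ and $r$ from Assumption~\ref{assu:dual-sc-prob}, will be the delicate part.
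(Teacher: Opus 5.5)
Your proposal follows the paper's proof essentially step for step. The shared steps are the Stratonovich rewriting of the dynamics and of $\int z_\theta\,\dd W$ (the paper converts the latter at the end rather than the start), the Stratonovich chain rule for $v_\omega(s,X_s^u)$, Lemma~\ref{lem:dp-inner-optim} with an arbitrary control for ``$\ge$'' and a near-optimal control for ``$\le$'', and a formal passage $\hat t\searrow t$ in differential form. One detail to fix is that the near-optimal control must be $\varepsilon h$-optimal, as the paper's $\varepsilon(\hat t-t)$ is, not merely $\varepsilon$-optimal; your caveat about the pathwise versus integrated interpretation is fair, and the paper's argument does not resolve it either.
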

\begin{proof}
By the state dynamic~\eqref{eq:ito-dynamic}, the $i$-th component of $X_t^u$, $i=1,\dots,d$, satisfies the Stratonovich SDE 
\begin{equation*}
    \dd X_t^{(i)} = \left( b^{(i)}(X_t, u_t) - \frac{1}{2} \sum_{j=1}^{d'} \langle \partial_{x} \sigma_{ij}(X_t), \sigma_{\cdot j}(X_t)\rangle \right)\,\dd t + \sum_{j=1}^{d'} \sigma_{ij}(X_t)\circ \dd W_t^{(j)}. 
\end{equation*}
Then apply the chain rule of of Stratonovich calculus to $v_\omega(t,X_t^u)$, we have 
\begin{equation} \label{eq:stratonovich-chain-rule}
\begin{aligned}
    &\quad \dd v_\omega = \partial_t v_\omega \,\dd t + \sum_{i=1}^d \partial_{x_i}v_\omega \circ \dd X_t^{u,(i)} \\
    &= \left(\partial_t v_\omega + \sum_{i=1}^d \partial_{x_i} v_\omega \left(b^{(i)}(X_t^u, u_t) - \frac{1}{2} \sum_{j=1}^{d'} \langle \partial_{x} \sigma_{ij}(X_t^u), \sigma_{\cdot j}(X_t^u)\rangle \right) \right)\,\dd t + \nabla_x v_\omega^\top \sigma(X_t^u) \circ \dd W_t. 
\end{aligned}
\end{equation}
In the following, we write $X_s^u := X_s^{t,x,u}(\omega)$ and $W_s := W_s(\omega)$ for simplicity and we will derive~\eqref{eq:spde-form} from Lemma \ref{lem:dp-inner-optim}. 
First, using Lemma \ref{lem:dp-inner-optim} and~\eqref{eq:stratonovich-chain-rule}, for any $u \in \mathbb{U}$, 
\begin{equation*}
\begin{aligned}
    0 &\le \int_t^{\hat{t}}r(X_s^{u}, u_s)\,\dd s - \int_t^{\hat{t}} z_\theta(s,X_s^{u})\,\dd W_s + v_\omega(\hat{t}, X_{\hat{t}}^{u}) - v_\omega(t,x) \\ 
    &= \int_t^{\hat{t}}\partial_s v_\omega \,\dd s + \int_t^{\hat{t}} \left(\sum_{i=1}^d \partial_{x_i} v_\omega(s, X_s^u) \left(b^{(i)}(X_s^u, u_s) - \frac{1}{2} \sum_{j=1}^{d'} \langle \partial_{x} \sigma_{ij}(X_s^u), \sigma_{\cdot j}(X_s^u)\rangle\right) + r(X_s^{u}, u_s) \right)\,\dd s \\
    &\quad + \int_t^{\hat{t}} \nabla_x v_\omega(s, X_s^u)^\top \sigma(X_s^u) \circ \dd W_t - \int_t^{\hat{t}} z_\theta(s, X_s^{u})\,\dd W_s
\end{aligned}
\end{equation*}
Let $\hat{t}\searrow t$. Then for all $u \in U$, 
\begin{equation*}
\begin{aligned}
    \dd v_\omega &+ \left\{ \sum_{i=1}^d \partial_{x_i} v_\omega(t,x)\left( b^{(i)}(x,u) - \frac{1}{2} \sum_{j=1}^{d'} \langle \partial_{x} \sigma_{ij}(x), \sigma_{\cdot j}(x)\rangle \right) + r(x,u) \right\}\,\dd t \\
    &+ \nabla_x v_\omega(t,x)^\top \sigma(x) \circ \dd W_t - z_\theta(t,x)\,\dd W_t \ge 0. 
\end{aligned}
\end{equation*}
Taking infimum leads to
\begin{equation} \label{eq:spde-1-side}
\begin{aligned}
    \dd v_\omega &+ \inf_{u\in U} \left\{ \sum_{i=1}^d \partial_{x_i} v_\omega(t,x)\left( b^{(i)}(x,u) - \frac{1}{2} \sum_{j=1}^{d'} \langle \partial_{x} \sigma_{ij}(x), \sigma_{\cdot j}(x)\rangle \right) + r(x,u) \right\}\,\dd t \\
    &+ \nabla_x v_\omega(t,x)^\top \sigma(x) \circ \dd W_t - z_\theta(t,x)\,\dd W_t \ge 0. 
\end{aligned}
\end{equation}

On the other hand, by~\eqref{eq:DP-pathwise}, for any $\varepsilon>0$, $\hat{t} - t>0$ small enough, there exists $u \in \mathbb{U}$ such that 
\begin{equation*}
    v_\omega(t,x) + \varepsilon(\hat{t} - t) \ge \int_t^{\hat{t}} r(X_s^{u}, u_s)\,\dd s - \int_t^{\hat{t}} z_\theta(s, X_s^{u})\,\dd W_s + v_\omega(\hat{t}, X_{\hat{t}}^{u}). 
\end{equation*}
It follows from~\eqref{eq:stratonovich-chain-rule} that 
\begin{equation*}
\begin{aligned}
    0 &\ge \int_t^{\hat{t}}\partial_s v_\omega\,\dd s + \int_t^{\hat{t}} \left(\sum_{i=1}^d \partial_{x_i} v_\omega(s, X_s^u) \left(b^{(i)}(X_s^u, u_s) - \frac{1}{2} \sum_{j=1}^{d'} \langle \partial_{x} \sigma_{ij}(X_s^u), \sigma_{\cdot j}(X_s^u)\rangle\right) + r(X_s^{u}, u_s) -\varepsilon \right)\,\dd s \\
    &\quad + \int_t^{\hat{t}} \nabla_x v_\omega(s, X_s^u)^\top \sigma(X_s^u) \circ \dd W_t - \int_t^{\hat{t}} z_\theta(s, X_s^{u})\,\dd W_s. 
\end{aligned}
\end{equation*}
Let $\hat{t}\searrow t$. Then for any $\epsilon>0$, there exists $u\in U$ such that 
\begin{equation*}
\begin{aligned}
    \dd v_\omega &+ \left\{ \sum_{i=1}^d \partial_{x_i} v_\omega(t,x)\left( b^{(i)}(x,u) - \frac{1}{2} \sum_{j=1}^{d'} \langle \partial_{x} \sigma_{ij}(x), \sigma_{\cdot j}(x)\rangle \right) + r(x,u) - \varepsilon \right\}\,\dd t \\
    &+ \nabla_x v_\omega(t,x)^\top \sigma(x) \circ \dd W_t - z_\theta(t,x)\,\dd W_t \le 0. 
\end{aligned}
\end{equation*}
Taking $\epsilon \to 0$ and together with~\eqref{eq:spde-1-side}, we obtain 
\begin{equation} \label{eq:mid-spde}
\begin{aligned}
    \dd v_\omega &+ \inf_{u\in U} \left\{ \sum_{i=1}^d \partial_{x_i} v_\omega(t,x)\left( b^{(i)}(x,u) - \frac{1}{2} \sum_{j=1}^{d'} \langle \partial_{x} \sigma_{ij}(x),  \sigma_{\cdot j}(x)\rangle \right) + r(x,u) \right\}\,\dd t \\
    &+ \nabla_x v_\omega(t,x)^\top \sigma(x) \circ \dd W_t - z_\theta(t,x)\,\dd W_t = 0. 
\end{aligned}
\end{equation}
Using the formula of Stratonovich integral, we obtain 
\begin{equation} \label{eq:zdw}
\begin{aligned}
    -z_\theta(t,x)\,\dd W_t &= \frac{1}{2}\sum_{j=1}^{d'} \sum_{k=1}^d \partial_{x_k} z_\theta^{(j)}(t,x) \sigma_{kj}(x) \,\dd t - \sum_{j=1}^{d'} z_\theta^{(j)}(t,x)\circ \dd W_t^{(j)}, \\
    &= \frac{1}{2}\op{Tr}\left(\sigma(x) J_xz_\theta(t,x)\right)\,\dd t - z_\theta(t,x)\circ \dd W_t. 
\end{aligned}
\end{equation}
Combining~\eqref{eq:mid-spde} and~\eqref{eq:zdw} lead to~\eqref{eq:spde-form}. 
\end{proof}

\begin{remark}
Nonlinear SPDEs have been studied in~\cite{lions1998fully} associated with the so-called pathwise stochastic control problem and later investigated in~\cite{buckdahn2007pathwise}. Given two independent Brownian motions $W, B$ and functions $g, r, b, \sigma, \theta$, the problem reads 
\begin{equation*}
    \inf_{\alpha} \bE\left[ g(X_T^\alpha) + \int_0^T r(X_t^\alpha, \alpha)\,\dd t \mid \mathcal{F}_T^B \right]
\end{equation*}
subject to 
\begin{equation*}
    \dd X_t^\alpha = b(X_t^\alpha, \alpha_t)\,\dd t + \sigma(X_t^\alpha, \alpha_t)\,\dd W_t + \theta(t, X_t^\alpha)\circ \dd B_t.  
\end{equation*}
Comparing this with the inner optimization problem~\eqref{eq:inner-optim-prob}, the objective functional in~\eqref{eq:inner-optim-prob} includes an additional It\^{o} integral $\int_t^T z_\theta(s, X_s^u)\,\dd W_s$. In theory, this issue could be handled by adding extra state variables $M_t^{(j)} = \int_0^t z_{\theta,j}(s, X_s^u)\,\dd W_s^{(j)}$ for $j=1,\dots, d$ and then applying the result in~\cite{buckdahn2007pathwise}. However, this state augmentation introduces $d$ additional state dimensions into the corresponding SPDE, substantially increasing the computational burden. For this reason, in Theorem \ref{thm:spde} we restrict our analysis to the special structure arising from the dual formulation of the classical stochastic control problem, which avoids this dimensionality issue.
\end{remark}

\section{Dual algorithm via computing SPDE} \label{sec:dual-algorithm}
In this section, we present the dual algorithm as a complement to primal methods using the neural network approximation. In principle, other types of primal method can also be incorporated into our dual approach as long as $\nabla_x V(t,x)$ can be approximated. We focus on neural networks due to their strong empirical performance in high-dimensional applications. As suggested in~\cite{rogers2007pathwise}, the dual approach may be particularly advantageous in high dimensions, where assessing the accuracy of value function approximations becomes challenging. Moreover, for stochastic control problems with high-dimensional state and/or control spaces, the curse of dimensionality renders many classical grid-based methods impossible to be applied. 

We demonstrate how to compute dual bounds efficiently by solving the SPDE~\eqref{eq:spde-form}. Our approaches rely on Wong-Zakai-type approximation for the SPDE with a sequence of PDEs. After establishing this approximation, we describe two numerical approaches for solving the resulting PDEs. 

\subsection{Wong-Zakai type approximation of SPDE}
Consider the approximation of the Brownian motion $W$ by a sequence of bounded, continuous, and piecewise differentiable functions $(w_n)_{n\in \mathbb{N}}$. 
As shown in~\cite{brzezniak1995almost}, the solution to an SPDE in Stratonovich form is an almost sure limit of the solutions to a sequence of PDEs in which $W_t$ is replaced by $w_n(t)$. More convergence results for Wong-Zakai approximations can be found in~\cite{twardowska1995approximation, caruana2011rough}. 
For $t\in [0, 1]$, the Karhunen–Lo\`{e}ve expansion of Brownian motion yields a smooth $d$-dimensional approximation $w_n = (w_n^{(j)})_{j=1}^{d}$ with each component taking the form 
\begin{equation*}
    w_n^{(j)}(t) = \sum_{i=0}^{n-1} \frac{\sqrt{2}}{(i+\frac{1}{2})\pi} \xi_{ij} \sin\left( (i+\frac{1}{2})\pi t \right), \quad j=1, \dots, d,
\end{equation*}
where $\xi_{ij} \sim \mathcal{N}(0,1)$ i.i.d. and $n\in \mathbb{N}$. Then the time derivative of the path is given by 
\begin{equation*}
    \dot{w}_n^{(j)}(t) = \sum_{i=0}^{n-1}\sqrt{2}\xi_{ij} \cos\left( (i+\frac{1}{2})\pi t \right), \quad j=1, \dots, d.
\end{equation*}
We collect all the random coefficients in the approximation into the vector $\bm{\xi} := (\xi_{ij})_{i=0,\dots,n-1, j=1,\dots, d} \in \R^{nd}$, which is distributed as a standard $(nd)$-dimensional normal random variable. To emphasize the dependence of the approximate path on these coefficients, we denote $w_n = w_n^{\bm{\xi}}$, and we have 
\begin{equation*}
    \lim_{n\to \infty} w_n^{\bm{\xi}} = W_t \quad \text{ a.s.}
\end{equation*}
When the dependence on $\bm{\xi}$ is clear from the context, we will omit the superscript for notational simplicity. 
Using these approximations, we construct a sequence of PDEs associated with the SPDE~\eqref{eq:spde-form}. 
For each $n\in \mathbb{N}$, the Wong-Zakai-type approximating PDE is 
\begin{equation} \label{eq:wong-zakai-pde}
\begin{aligned}
    \frac{\partial v_n}{\partial t} + \inf_{u\in U}\{\langle \nabla_x v_n, b(x,u)\rangle + r(x, u)\} + \left( \nabla_x v_n^\top \sigma(x) - z_\theta(t,x) \right) \dot{w}_n(t) & \\
    + \frac{1}{2}\op{Tr}(\sigma(x) J_xz_\theta(t,x)) - \langle \nabla_x v_n, \frac{1}{2}(\nabla \sigma:\sigma)(x)\rangle &= 0, \quad (t, x) \in (0, T)\times \R^d \\
    v_n(T, x) &= g(x), \quad x\in \R^d. 
\end{aligned}
\end{equation}

This equation is a first-order HJB equation, and its solution represents the value function of the following deterministic optimal control problem 
\begin{equation} \label{eq:doc-prob}
\begin{aligned}
    v_n(0,x_0) =&\inf_u \bigg\{ \int_0^T \bigg(r(x(t), u(t)) - \langle z_\theta(t, x(t)), \dot{w}_n(t)\rangle + \frac{1}{2}\op{Tr}\left( \sigma(x) J_x z_\theta(t, x(t)) \right)\bigg)\,\dd t + g(x(T))\bigg\}, \\
    &\text{s.t. } \frac{\dd x(t)}{\dd t} = b(x(t), u(t)) - \frac{1}{2}(\nabla \sigma:\sigma)(x) + \sigma(x) \dot{w}_n(t). 
\end{aligned}
\end{equation}
We write $v_n = v_n^{\bm{\xi}}$ to emphasize the dependence of $v_n$ on random coefficients and drop the superscript when the dependence is clear from the context. By~\cite[Section 7]{caruana2011rough}, under suitable conditions, we have 
\begin{equation*}
   v_n^{\bm{\xi}} \overset{p}{\to} v,   
\end{equation*} 
where $v$ is the solution of the SPDE~\eqref{eq:spde-form}. 
The lower bound~\eqref{eq:lower-bound} can be approximated by 
\begin{equation*}
    V_{\text{lower}} = \mathbb{E}[v_\omega(0,x_0)] \approx \mathbb{E}_{\bm{\xi}}[v_n^{\bm{\xi}}(0,x_0)], 
\end{equation*}
for sufficiently large $n\in \mathbb{N}$. 
Finally, we approximate the expectation using the Monte Carlo method by averaging over the solutions corresponding to $M$ independent samples $\bm{\xi}^{(m)}$: 
\begin{equation*}
    \mathbb{E}_{\bm{\xi}}[v_n^{\bm{\xi}}(0,x_0)] \approx \frac{1}{M}\sum_{m=1}^M v_n^{(m)}(0,x_0), 
\end{equation*}
where $v_n^{(m)} = v_n^{\bm{\xi}^{(m)}}$ denotes the solution to the HJ equation corresponding to the $m$-th sample $\bm{\xi}^{(m)}$ of the random vector $\bm{\xi}$. 

\subsection{Two dual approaches}

We now present two methods to solve numerically the dual problem. 

\subsubsection{Method 1: Using Pontryagin's maximum principle}
\label{sec:pontryagin-method}
In this section, we present a curse-of-dimensionality-free method for solving the Hamilton--Jacobi equation \eqref{eq:wong-zakai-pde} (or equivalently \eqref{eq:doc-prob}) for a given realization $w_n = w_n^{\bm{\xi}}$, based on Pontryagin’s maximum principle. The approach characterizes the solution of the associated deterministic optimal control problem through a system of forward–backward ODEs, thereby enabling efficient numerical computation. We begin by introducing the assumptions, following~\cite[Section 3.2]{yong1999stochastic}. Pontryagin’s maximum principle is then stated in Theorem~\ref{thm:pontryagin}. Its proof can be found in~\cite[Section 3.2]{yong1999stochastic}. 

\begin{assumption} \label{assu:Pontryagin}
Let $\bm{\xi}\in \R^{nd}$. Assume that 
\begin{enumerate}
    \item For any parameter $\theta$, the functions $(t,x)\mapsto \langle z_\theta(t,x), \dot{w}_n^{\bm{\xi}}(t)\rangle$ and $(t,x)\mapsto \op{Tr}(\sigma(x)J_xz_\theta(t,x))$ are uniformly continuous with respect to $(t,x)\in [0,T]\times \R^d$. The function $(\nabla \sigma:\sigma)(\cdot)$ is uniformly continuous with respect to $x\in \R^d$.
    \item There exists an $x_0\in \R^d$ and positive constant $A_1, A_2$ such that 
    \begin{equation*}
        \|b(x_0,u)\|_{\R^d} \le A_1, \|r(x_0,u)\|\le A_2  \quad \forall u\in U.
    \end{equation*}
    \item The functions $b(\cdot, u)$, $r(\cdot,u)$, $\sigma$, $g$, $z_\theta(t,\cdot)$, $J_xz_\theta(t,\cdot)$, and $(\nabla \sigma:\sigma)$ are continuously differentiable with respect to $x\in \R^d$ for any $t\in [0,T]$. 
    \item The function $\nabla g$ is uniformly continuous in $x$. $\nabla_x \tilde{b}(t,x,u; w_n^{\bm{\xi}})$ and $\nabla_x \tilde{r}(t,x,u; w_n^{\bm{\xi}})$ are uniformly continuous in $x$ and $u$ for any $t\in [0,T]$, where 
    \begin{equation} \label{eq:defi-tilde-b-r}
    \begin{aligned}
        \tilde{b}(t,x,u; w_n^{\bm{\xi}}) &:= b(x, u) - \frac{1}{2}(\nabla \sigma:\sigma)(x) + \sigma(x) \dot{w}_n^{\bm{\xi}}(t), \\
        \tilde{r}(t,x,u; w_n^{\bm{\xi}}) &:= r(x, u) - \langle z_\theta(t, x), \dot{w}_n^{\bm{\xi}}(t)\rangle + \frac{1}{2}\op{Tr}\left( \sigma(x) J_x z_\theta(t, x) \right). 
    \end{aligned}
    \end{equation}
\end{enumerate}
\end{assumption}

\begin{theorem}[Pontryagin's maximum principle]
\label{thm:pontryagin}
Let $\bm{\xi}\in \R^{nd}$. Let Assumptions \ref{assu:dual-sc-prob} and \ref{assu:Pontryagin} hold. Let $u^*(\cdot)$ be an optimal control for the problem~\eqref{eq:doc-prob} with $w_n = w_n^{\bm{\xi}}$ and $x^*(\cdot)$ be the optimally controlled state trajectory. Then there exists a function $p^*:[0,T] \to \R^d$ such that $(x^*, p^*)$ is a solution pair to the two point boundary value problem 
\begin{equation} \label{eq:two-point-boundary-value-prob}
    \begin{aligned}
        \dot{x}^*(t) &= \partial_p H(t, x^*(t), u^*(t), p^*(t); w_n^{\bm{\xi}}), \quad x^*(0) = x_0, \\
        \dot{p}^*(t) &= -\partial_x H(t,x^*(t), u^*(t), p^*(t); w_n^{\bm{\xi}}), \quad p^*(T) = \nabla g(x^*(T)), 
    \end{aligned}
\end{equation}
where we denote 
\begin{equation} \label{eq:hamiltonian-defi}
\begin{aligned}
    H(t,x,u,p; w_n^{\bm{\xi}}) := &\langle p, b(x,u)\rangle + r(x, u) + \left( p^\top \sigma(x) - z_\theta(t,x) \right) \dot{w}_n^{\bm{\xi}}(t)  \\
    &+ \frac{1}{2}\op{Tr}(\sigma(x) J_xz_\theta(t,x)) - \langle p, \frac{1}{2}(\nabla \sigma:\sigma)(x)\rangle, \\
    \mathcal{H}(t,x,p;w_n^{\bm{\xi}}) :=&\inf_{u\in U} H(t,x,u,p; w_n^{\bm{\xi}}), 
\end{aligned}
\end{equation}
and $u^*$ satisfies 
\begin{equation} \label{eq:u-star}
    H(t, x^*(t), u^*(t), p^*(t); w_n^{\bm{\xi}}) = \inf_{u\in U} H(t, x^*(t), u, p^*(t); w_n^{\bm{\xi}}) \quad \text{ for all } t \in [0,T].
\end{equation}
\end{theorem}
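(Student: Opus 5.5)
The plan is to recognize problem~\eqref{eq:doc-prob}, for a fixed realization $\bm{\xi}$, as a standard deterministic Bolza problem. Its time-dependent dynamics are $\tilde{b}$ and its running cost is $\tilde{r}$, both as defined in~\eqref{eq:defi-tilde-b-r}. The Hamiltonian in~\eqref{eq:hamiltonian-defi} is exactly $H(t,x,u,p;w_n^{\bm{\xi}}) = \langle p, \tilde{b}(t,x,u;w_n^{\bm{\xi}})\rangle + \tilde{r}(t,x,u;w_n^{\bm{\xi}})$. This identity is a direct expansion: the term $p^\top\sigma(x)\dot{w}_n$ comes from $\langle p,\sigma\dot w_n\rangle$, and $-z_\theta\dot w_n$ comes from $\tilde r$. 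Once it is in place, the statement follows from the classical (minimum-form) maximum principle of~\cite[Section 3.2]{yong1999stochastic}, applied with the diffusion set to zero. The only real work is to check that that theorem's hypotheses hold for $(\tilde b,\tilde r,g)$, since those hypotheses are stated for time-homogeneous or time-measurable data.

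\textbf{Step 1 (regularity of the coefficients).} Fix $\bm{\xi}$. The path $w_n^{\bm{\xi}}$ is a finite trigonometric sum, so $\dot{w}_n^{\bm{\xi}}$ is smooth and bounded on $[0,T]$. Then $\tilde b$ and $\tilde r$ are measurable in $t$ and, by Assumption~\ref{assu:Pontryagin}(1), continuous in $(t,x)$. By Assumption~\ref{assu:Pontryagin}(3) they are $C^1$ in $x$, and by Assumption~\ref{assu:Pontryagin}(4) their $x$-gradients are uniformly continuous in $(x,u)$. We also need Lipschitz bounds: $\tilde b$, $\tilde r$ and $g$ must be Lipschitz in $x$ uniformly in $(t,u)$. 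For $b$, $r$ and $g$ this is Assumption~\ref{assu:dual-sc-prob}. For the extra terms $\sigma\dot w_n$, $(\nabla\sigma:\sigma)$, $\langle z_\theta,\dot w_n\rangle$ and $\mathrm{Tr}(\sigma J_x z_\theta)$, I would obtain the Lipschitz bounds from bounded derivatives, taking bounded derivatives to be implicit in Assumption~\ref{assu:Pontryagin}(3)--(4). Assumption~\ref{assu:Pontryagin}(2) supplies the growth bound at a reference point. Together these are exactly the standing conditions (S1)--(S2) used in~\cite{yong1999stochastic}.

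\textbf{Step 2 (spike variation).} Let $u^*$ be optimal with trajectory $x^*$. For $E_\varepsilon\subset[0,T]$ with $|E_\varepsilon|=\varepsilon$ and any $u\in U$, set $u^\varepsilon = u$ on $E_\varepsilon$ and $u^\varepsilon = u^*$ elsewhere. The first-order expansion is $x^\varepsilon = x^* + y^\varepsilon + o(\varepsilon)$, where $y^\varepsilon$ solves the variational equation $\dot y = \partial_x\tilde b(t,x^*,u^*)\,y + [\tilde b(t,x^*,u)-\tilde b(t,x^*,u^*)]\mathbbm{1}_{E_\varepsilon}$ with $y(0)=0$. To prove the $o(\varepsilon)$ remainder I would use Gronwall together with the uniform continuity of $\partial_x\tilde b$ in $(x,u)$. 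I expect this to be the main obstacle: the time dependence through $\dot w_n$ must be controlled uniformly, and boundedness of $\dot w_n$ on $[0,T]$ handles it.

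\textbf{Step 3 (adjoint and conclusion).} Define $p^*$ as the solution of the linear backward ODE $\dot p = -\partial_x H(t,x^*,u^*,p)$ with $p(T)=\nabla g(x^*(T))$. It exists and is unique because the equation is linear with bounded continuous coefficients. Pairing $p^*$ with $y^\varepsilon$ and integrating by parts gives
\[
0 \le J(u^\varepsilon)-J(u^*) = \int_{E_\varepsilon}\big[H(t,x^*,u,p^*)-H(t,x^*,u^*,p^*)\big]\,\dd t + o(\varepsilon).
\]
Take $E_\varepsilon=[\tau,\tau+\varepsilon]$ and let $\varepsilon\to0$. At Lebesgue points $\tau$ this yields~\eqref{eq:u-star} for a.e.\ $t$, and a separability argument over a countable dense subset of $U$ removes the dependence of the null set on $u$. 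The state equation $\dot x^*=\partial_p H$ is simply the definition of $\tilde b$, which completes~\eqref{eq:two-point-boundary-value-prob}.
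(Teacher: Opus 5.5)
Your proposal is correct and follows the paper's route: the paper gives no proof beyond citing the deterministic maximum principle of \cite[Section 3.2]{yong1999stochastic}, and your identification $H=\langle p,\tilde b\rangle+\tilde r$, your hypothesis check and your spike-variation sketch are what that citation supplies. Two refinements: you obtain \eqref{eq:u-star} only for a.e.\ $t$, which is the correct form because changing $u^*$ on a null set preserves optimality, so the statement's ``for all $t$'' cannot hold in general; and the bounded-derivative hypothesis you add for the $u$-independent terms, which the paper's bare citation tacitly needs as well, can be weakened to bounds on a compact tube around the graph of $x^*$, in which the spike-perturbed trajectories stay for small $\varepsilon$.
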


The proposed algorithm can be viewed as a successive approximation scheme for solving the forward-backward ODEs associated with the control problem. Consider a time grid $\mathcal{T} = \{t_i = i\delta t: \delta t = T/N_T, i= 0,1,\dots, N_T\}$. Let $x^{(\ell)}$ and $p^{(\ell)} $ denote the approximate solutions obtained at the $\ell$-th iteration.
At iteration $\ell+1$, the system of forward-backward ODEs~\eqref{eq:two-point-boundary-value-prob} can be discretized using the Euler method as follows: 
\begin{align}
    x_{i+1}^{(\ell+1)} &= x_i^{(\ell+1)} + \tilde{b}(t_i, x_i^{(\ell+1)}, u_i^{(\ell)}; w_n^{\bm{\xi}}) \delta t, \quad x_{0}^{(\ell+1)} = x_0, \label{eq:forward-euler-ode} \\
    p_i^{(\ell + 1)} &= p_{i+1}^{(\ell+1)} + \partial_x H(t_{i+1}, x_{i+1}^{(\ell+1)}, u_{i+1}^{(\ell)}, p_{i+1}^{(\ell+1)}; w_n^{\bm{\xi}})\delta t, \quad p_{N_T}^{(\ell+1)} = \nabla g(x_{N_T}^{(\ell+1)}). \label{eq:backward-euler-ode}
\end{align}
Alternative ODE solvers may also be used to improve computational efficiency.
At each iteration, we first solve the forward equation given the current control, then update the adjoint/backward variable, and finally improve the control via the optimality condition. The update step uses a relaxation (or damping) parameter $\alpha \in (0,1]$, which helps stabilize the fixed-point iteration in practice. Once the stopping criterion is satisfied or the maximum number of iterations is reached, we compute the approximate value by 
\begin{equation} \label{eq:approx-pathwise-value}
    \overline{v}_n^{(m)} = \sum_{i=1}^{N_T} \tilde{r}(t_i, x_i^{(\ell)}, u_i^{(\ell)};w_n^{\bm{\xi}^{(m)}}) \delta t + g(x_{N_T}^{(\ell)}). 
\end{equation}
We summarize the algorithm based on Pontryagin's maximum principle in Algorithm \ref{alg:dual-pontryagin}. 

\begin{algorithm}[htbp!]
\caption{Computing the dual lower bound using Pontryagin's maximum principle}
\label{alg:dual-pontryagin}
\begin{algorithmic}[1]
\REQUIRE The trained neural network approximation $z_\theta$ from any primal approach; a time grid $\mathcal{T}$; number of Monte Carlo samples $M\in \mathbb{N}$; maximum iterations $N_{iter}\in \mathbb{N}$; damping factor $\alpha\in (0,1]$ for updating the adjoint state; initial state $x_0$; convergence threshold $\varepsilon>0$ 
\ENSURE The dual lower bound estimate $\bar{v}_{dual} \approx V_{\text{lower}}$ 
\FOR{$m=1,\dots,M$}
    \STATE Generate a $d\times n$-dimensional standard normal random variable $\bm{\xi}^{(m)}$ 
    \STATE Compute $\dot{w}_n(t) = \dot{w}_n^{\bm{\xi}^{(m)}}(t)$ using $\bm{\xi}^{(m)}$ for $t\in \mathcal{T}$ 
    \STATE Initialize $(x^{(0)}(t), u^{(0)}(t), p^{(0)}(t))$ for $t\in \mathcal{T}$ 
    \FOR{$\ell =0:N_{iter}-1$}
        \STATE Update $x^{(\ell+1)}$ by \eqref{eq:forward-euler-ode} in a forward manner
        \STATE Update $p^{(\ell+1)}$ by \eqref{eq:backward-euler-ode} in a backward manner
        \STATE Update $u^{(\ell+1)}$ and $p^{(\ell+1)}$ by $u^{(\ell+1)} \leftarrow \alpha u^{(\ell+1)} + (1-\alpha)u^{(\ell)}$ 
        \IF{$\|x^{(\ell+1)} - x^{(\ell)}\|_\infty \le  \varepsilon$}
        \BREAK 
        \ENDIF
    \ENDFOR
    \STATE Compute an approximation $\bar{v}_n^{(m)}$ to $v_n^{(m)}(0,x_0)$ by \eqref{eq:approx-pathwise-value} 
\ENDFOR
\RETURN $\bar{v}_{dual} = \frac{1}{M}\sum_{m=1}^M \bar{v}_n^{(m)}$ 
\end{algorithmic}
\end{algorithm}

\subsubsection{Method 2: Using generalized Hopf formula} \label{sec:hopf-method}
In this section, we describe how to solve the Hamilton--Jacobi equation~\eqref{eq:wong-zakai-pde} using the generalized Hopf-Lax formula introduced in~\cite{chow2019algorithm} and also justified in~\cite{yegorov2021perspectives}. The generalized Hopf and Lax formulas provide explicit representations of solutions to HJ equations in terms of maximization and minimization problems, respectively. These formulas extend the classical Hopf–Lax representation, originally developed for state-independent HJ equations, to the state-dependent setting.

In the following, we solve the problem~\eqref{eq:wong-zakai-pde} (equivalently, \eqref{eq:doc-prob}) using the generalized Hopf formula. The resulting maximization formulation is particularly advantageous as it provides a valid lower bound of the optimal value even when the associated optimization problem is solved only approximately. Specifically, assume that we obtain a suboptimal solution $\hat{v}_n^{(m)} \approx v_n^{(m)}(0,x_0)$. There holds $\hat{v}_n^{(m)} \le v_n^{(m)}(0,x_0)$. Then we can compute a true dual lower bound for the original stochastic optimal control problem such that 
\begin{equation*}
    \hat{v}_{\text{dual}} :=\frac{1}{M}\sum_{m=1}^M \hat{v}_n^{(m)} \le \frac{1}{M}\sum_{m=1}^M v_n^{(m)}(0,x_0) \approx \mathbb{E}_{\bm{\xi}}[v_n^{\bm{\xi}}(0,x_0)] \approx \mathbb{E}[v_\omega(0, x_0)] = V_{\text{lower}} \le V(0, x_0),  
\end{equation*}
where the approximations come from the Monte Carlo method and the truncation of the Karhunen–Lo\`{e}ve expansion of the Brownian motion into $n$ terms in the Wong-Zakai type approximation of the SPDE.

The following theorem states the generalized Hopf formula, which was originally proposed as a conjecture in~\cite[eq. (1.2)]{chow2019algorithm}. Subsequent work by Yegorov and Dower identified the proof of this generalized Hopf formula as an open problem, see~\cite[Section 6]{yegorov2021perspectives}. In this paper, we provide a proof under the same assumptions as those required for Pontryagin’s maximum principle, together with the additional assumptions that there exists an optimal control and the terminal cost function $g$ is proper convex. The following assumption and Assumption \ref{assu:Pontryagin} ensure the existence of an optimal control for the problem~\eqref{eq:doc-prob}, see the classical textbook~\cite[Theorem 4.1, Chapter 3]{fleming2012deterministic}. 

\begin{assumption} \label{assu:existence}
    Given any $\bm{\xi} \in \R^{nd}$. Assume that 
    \begin{enumerate}
        \item There exist positive constants $C_1, C_2$ such that 
        \begin{equation*}
            \begin{aligned}
                |\tilde{b}(t,x,u;w_n^{\bm{\xi}})| &\le C_1 (1 +|x| +|u|), \\
                |\tilde{b}(t,x',u;w_n^{\bm{\xi}}) - \tilde{b}(t,x,u;w_n^{\bm{\xi}})| &\le C_2|x-x'|(1 + |u|), 
            \end{aligned}
        \end{equation*}
        for all $t\in [0,T]$, $x, x'\in \R^d$, $u\in U$. 
        \item $U$ is closed. 
        \item For each $(t,x)\in [0,T]\times \R^d$, there exists a continuous function $\ell:U\to \R$ such that 
        \begin{equation*}
            \tilde{r}(t,x,u;w_n^{\bm{\xi}}) \ge \ell(u) \quad \text{ and }\quad \frac{\ell(u)}{|u|} \to +\infty \text{ as } |u|\to \infty. 
        \end{equation*}
        \item For each $(t,x)\in [0,T]\times \R^d$, the set 
        \begin{equation*}
            \tilde{F}(t,x;w_n^{\bm{\xi}}) := \{(z,z_{d+1})\in \R^{d+1}: z=\tilde{b}(t,x,u;w_n^{\bm{\xi}})\in \R^d, z_{d+1} \ge \tilde{r}(t,x,u;w_n^{\bm{\xi}})\in \R, u\in U\}
        \end{equation*}
        is convex. 
    \end{enumerate}
\end{assumption}

\begin{theorem}[Generalized Hopf formula] \label{thm:generalized-hopf}
Given any $\bm{\xi}^{(m)}\in \R^{nd}$. Let Assumptions~\ref{assu:dual-sc-prob}, ~\ref{assu:Pontryagin} and~\ref{assu:existence} hold. Assume that $g$ is proper convex and admits a convex conjugate $g^*$. Let $v_n^{(m)}:[0,T]\times \R^d \to \R$ be the solution to the problem~\eqref{eq:doc-prob} with $w_n = w_n^{\bm{\xi}^{(m)}}$. Then for any $x_0\in \R^d$, the value $v_n^{(m)}(0,x_0)$ can be represented as 
\begin{equation} \label{eq:generalized-hopf-formula}
\begin{aligned}
    v_n^{(m)}(0, x_0) = \sup_{p_0\in \R^d}\bigg\{ &\langle x_0, p_0\rangle - g^*(p(T)) + \int_0^T \bigg[ \mathcal{H}(t, x(t), p(t); w_n^{\bm{\xi}^{(m)}}) - \langle x(t), \partial_x \mathcal{H}(t, x(t), p(t); w_n^{\bm{\xi}^{(m)}})\rangle \bigg]\,\dd t: \\
    & \dot{x}(t) = \partial_p \mathcal{H}(t, x(t), p(t); w_n^{\bm{\xi}^{(m)}}), \quad x(0) = x_0, \\
    & \dot{p}(t) = -\partial_x \mathcal{H}(t,x(t),p(t); w_n^{\bm{\xi}^{(m)}}), \quad p(0)= p_0 \bigg\},  
\end{aligned}
\end{equation}
where $\mathcal{H}$ is defined in \eqref{eq:hamiltonian-defi}. 
\end{theorem}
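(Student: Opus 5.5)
We prove the two inequalities $v_n^{(m)}(0,x_0)\ge \text{RHS}$ and $v_n^{(m)}(0,x_0)\le\text{RHS}$ separately. Fix $\bm{\xi}^{(m)}$ and write $\mathcal{H}(t,x,p)$, $\tilde b$, $\tilde r$ without the path argument. For any $p_0$, let $(x,p)$ solve the characteristic system in \eqref{eq:generalized-hopf-formula}, and let $\Phi(p_0)$ denote the bracketed functional. The key identity follows from $\mathcal{H}(t,x,p)=\inf_u\{\langle p,\tilde b\rangle+\tilde r\}$ and the characteristic equations. Using $\tfrac{\dd}{\dd t}\langle x,p\rangle=\langle p,\partial_p\mathcal{H}\rangle-\langle x,\partial_x\mathcal{H}\rangle$, we obtain
\begin{equation*}
\Phi(p_0)=\langle x(T),p(T)\rangle-g^*(p(T))+\int_0^T\big[\mathcal{H}(t,x,p)-\langle p,\partial_p\mathcal{H}(t,x,p)\rangle\big]\,\dd t .
\end{equation*}
Where $\mathcal{H}$ is differentiable in $p$, the envelope theorem gives $\partial_p\mathcal{H}=\tilde b(t,x,\hat u)$ with $\hat u(t)$ a minimizer. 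Then $\mathcal{H}-\langle p,\partial_p\mathcal{H}\rangle=\tilde r(t,x,\hat u)$, so $x(\cdot)$ is an admissible trajectory of \eqref{eq:doc-prob} driven by $\hat u$. This step needs a measurable selection of $\hat u$, justified by Assumption~\ref{assu:existence}(2)--(4) and the coercivity of $\ell$.

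\textbf{Upper bound ($\text{RHS}\le v$).} For each $p_0$, Fenchel--Young gives $\langle x(T),p(T)\rangle-g^*(p(T))\le g(x(T))$. Combined with the identity above, $\Phi(p_0)\le \int_0^T\tilde r(t,x,\hat u)\,\dd t+g(x(T))$, which is the cost of an admissible control and hence at least $v_n^{(m)}(0,x_0)$. This direction appears to go the wrong way, so it needs care. What we actually need is $\Phi(p_0)\le v$, i.e.\ a lower bound of $v$ by a dual functional. I would instead compare with an \emph{arbitrary} admissible control $u$ with trajectory $y$. Convexity of $g$ gives $g(y(T))\ge \langle y(T),p(T)\rangle-g^*(p(T))$. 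Integrating $\tfrac{\dd}{\dd t}\langle y,p\rangle$ and using $\langle p,\tilde b(t,y,u)\rangle+\tilde r(t,y,u)\ge\mathcal{H}(t,y,p)$, we get
\begin{equation*}
J[u]-\Phi(p_0)\ge\int_0^T\big[\mathcal{H}(t,y,p)-\mathcal{H}(t,x,p)-\langle y-x,\partial_x\mathcal{H}(t,x,p)\rangle\big]\,\dd t .
\end{equation*}
The integrand is nonnegative if $x\mapsto\mathcal{H}(t,x,p)$ is convex along the characteristics. This is the main obstacle, since $\mathcal{H}$ is an infimum of functions affine in $p$ and is not obviously convex in $x$. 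I would try to extract the needed convexity from Assumption~\ref{assu:existence}(4), the convexity of the epigraph set $\tilde F$, which is the classical Filippov-type condition making the relaxed value coincide with $v$. Failing that, I would restrict to $p_0$ lying in the image of the Pontryagin adjoint and argue via a verification inequality on the value function $v_n$. Taking the infimum over $u$ then yields $\Phi(p_0)\le v_n^{(m)}(0,x_0)$ for all $p_0$.

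\textbf{Lower bound ($v\le\text{RHS}$, attainment).} By Assumption~\ref{assu:existence} and the result in~\cite{fleming2012deterministic}, an optimal control $u^*$ with trajectory $x^*$ exists. Theorem~\ref{thm:pontryagin} then supplies $p^*$ with $p^*(T)=\nabla g(x^*(T))$ and $u^*$ minimizing $H$. Hence $(x^*,p^*)$ solves the characteristic system with $p_0:=p^*(0)$. At this $p_0$, equality holds in Fenchel--Young because $p^*(T)\in\partial g(x^*(T))$. The running-cost identity also holds with equality because $u^*$ realizes the infimum, with $\partial_p\mathcal{H}=\tilde b(\cdot,u^*)$ by the envelope theorem. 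Therefore $\Phi(p^*(0))=J[u^*]=v_n^{(m)}(0,x_0)$, and the supremum is attained. Together with the previous paragraph, this proves \eqref{eq:generalized-hopf-formula}.
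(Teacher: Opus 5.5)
\textbf{Gap: the inequality $\sup_{p_0}\Phi(p_0)\le v_n^{(m)}(0,x_0)$ is never established.}

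Your attainment argument is the paper's Step~5: take $p_0=p^*(0)$ from Pontryagin and use equality in Fenchel--Young at $p^*(T)=\nabla g(x^*(T))$. Your comparison
$J[u]-\Phi(p_0)\ge\int_0^T[\mathcal H(t,y,p)-\mathcal H(t,x,p)-\langle y-x,\partial_x\mathcal H(t,x,p)\rangle]\,\dd t$
against an arbitrary admissible pair $(u,y)$ is correct. It is a leaner version of the paper's Steps~1--4, which make the same comparison only against the optimal $x^*$, after a detour through the conjugate in $p$.

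The genuine gap is the one you flag. Nonnegativity of that integrand is the supporting-hyperplane inequality for $x\mapsto\mathcal H(t,x,p)$ at $x(t)$. You never prove it, and neither of your remedies can.
\begin{itemize}
\item Assumption~\ref{assu:existence}(4) concerns the set of attainable (velocity, cost) pairs at a \emph{frozen} $(t,x)$. It yields the Filippov/existence property and concavity of $\mathcal H$ in $p$ (which is automatic anyway). It carries no information about how $\mathcal H$ depends on $x$.
\item Restricting $p_0$ is not allowed, since the supremum in \eqref{eq:generalized-hopf-formula} runs over all $p_0\in\R^d$. Moreover, for any characteristic with $p(T)=\nabla g(x(T))$, your own identity shows that $\Phi(p_0)$ equals the cost of the associated control. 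So a single non-optimal Pontryagin extremal already violates $\Phi(p_0)\le v_n^{(m)}(0,x_0)$.
\end{itemize}

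\textbf{The paper does not close this step either.} Its Step~4 proves exactly your inequality (with $y=x^*$) by Fenchel--Young for the conjugate $F$ of $\mathcal H(t,\cdot,p(t))$ in $x$. It claims equality at $x(t)$ because $-\dot p(t)=\partial_x\mathcal H(t,x(t),p(t))$. But that equality requires $x(t)$ to be a \emph{global} maximizer of $x\mapsto\langle\eta,x\rangle-\mathcal H(t,x,p)$. The first-order condition guarantees this only when $\mathcal H(t,\cdot,p)$ is convex, so Step~4 silently assumes the convexity you identified.

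This convexity does not follow from the stated hypotheses. Take $d=1$, constant $\sigma$, $\bm{\xi}^{(m)}=0$ and $z_\theta\equiv 0$, so that $\tilde b=b$ and $\tilde r=r$. Let $U=[-1,1]$, $b(x,u)=u$, $r(x,u)=\tfrac12u^2-4x^2/(1+x^2)$, $g(x)=\sqrt{1+x^2}$, $T=2$, $x_0=0$.
\begin{itemize}
\item All assumptions of the theorem hold; the growth condition in Assumption~\ref{assu:existence}(3) is vacuous on a bounded $U$. Also $\mathcal H$ is $C^1$.
\item From $p_0=0$ the characteristic is $(x,p)\equiv(0,0)$, which is a non-optimal extremal with $p(T)=g'(x(T))$. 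Hence $\Phi(0)=-g^*(0)=1$.
\item The control $u=1$ on $[0,1]$, $u=0$ on $(1,2]$ has cost $\sqrt{2}+\pi-\tfrac{11}{2}<0<1$.
\end{itemize}
So the statement needs an extra hypothesis, such as convexity of $x\mapsto\mathcal H(t,x,p)$ for every $(t,p)$. This holds, for example, when $\tilde b$ is affine in $(x,u)$, $U$ is convex and $\tilde r(t,\cdot,\cdot)$ is jointly convex. Under such a hypothesis your argument closes. Your direction $\sup_{p_0}\Phi(p_0)\le v_n^{(m)}(0,x_0)$ then needs neither existence of an optimal control nor Pontryagin.
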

\begin{proof}
{\bf Step 1.} We begin by characterizing the optimal cost using Pontryagin's Maximum Principle (Theorem \ref{thm:pontryagin}). The value function is given by 
\begin{equation} \label{eq:optimal-cost}
    v_n^{(m)}(0,x_0) = g(x^*(T)) + \int_0^T \tilde{r}(t,x^*(t), u^*(t);w_n^{\bm{\xi}^{(m)}})\,\dd t, 
\end{equation}
where the optimal trajectory $(x^*, p^*)$ solves the Hamiltonian system 
\begin{equation} \label{eq:Pontryagin-ode}
\begin{aligned}
    \dot{x}^*(t) &= \partial_p \mathcal{H}(t,x^*(t), p^*(t); w_n^{\bm{\xi}^{(m)}}), \quad &x^*(0) = x_0, \\
    \dot{p}^*(t) &= -\partial_x \mathcal{H}(t, x^*(t), p^*(t); w_n^{\bm{\xi}^{(m)}}), \quad &p^*(T) = \nabla g(x^*(T)). 
\end{aligned}
\end{equation}
Furthermore, by Assumption \ref{assu:existence} (existence of optimal control), for any $t\in [0,T]$ there exists $u^*(t)$ satisfying the minimization condition 
\begin{equation*}
    u^*(t) \in \operatorname*{arg\,inf}_{u\in U} H(t,x^*(t), u, p^*(t); w_n^{\bm{\xi}^{(m)}}).
\end{equation*}
We shall express the running cost $\tilde{r}$ in terms of the Hamiltonian and the state dynamics. 
Recalling the definition of $\mathcal{H}$ and observing from \eqref{eq:Pontryagin-ode} that $\dot{x}^*(t) = \tilde{b}(t,x^*(t), u^*(t); w_n^{\bm{\xi}^{(m)}})$, we have 
\begin{equation*}
    \mathcal{H}(t,x^*(t), p^*(t); w_n^{\bm{\xi}^{(m)}}) = \langle p^*(t), \tilde{b}(t,x^*(t), u^*(t); w_n^{\bm{\xi}^{(m)}})\rangle + \tilde{r}(t,x^*(t), u^*(t); w_n^{\bm{\xi}^{(m)}}). 
\end{equation*}
Rearranging terms yields the relationship 
\begin{equation} \label{eq:r-tilde-by-hamiltonian}
    \tilde{r}(t,x^*(t), u^*(t); w_n^{\bm{\xi}^{(m)}}) = \mathcal{H}(t,x^*(t), p^*(t); w_n^{\bm{\xi}^{(m)}}) - \langle p^*(t), \dot{x}^*(t) \rangle. 
\end{equation}

{\bf Step 2.} We establish the following identity for any $t\in [0,T]$, 
\begin{equation} \label{eq:equivalence-1}
    \mathcal{H}(t,x^*(t), p^*(t); w_n^{\bm{\xi}^{(m)}}) - \langle p^*(t), \dot{x}^*(t) \rangle = \sup_{p\in \R^d}\left\{ \mathcal{H}(t,x^*(t), p; w_n^{\bm{\xi}^{(m)}}) - \langle p, \dot{x}^*(t) \rangle \right\}.
\end{equation}
First, observe that for any fixed control $u\in U$, the mapping $p\mapsto H(t,x^*(t),u, p;w_n^{\bm{\xi}^{(m)}})$ is affine. Since $\mathcal{H}$ is defined as the infimum of these affine functions over $u$, the map $p\mapsto \mathcal{H}(t,x^*(t),p;w_n^{\bm{\xi}^{(m)}})$ is concave. 

Then, we define the convex function 
\begin{equation*}
    h(p) := -\mathcal{H}(t,x^*(t),p;w_n^{\bm{\xi}^{(m)}}). 
\end{equation*}
Its convex conjugate $h^*$ is given by 
\begin{equation*}
    h^*(q) := \sup_{p\in \mathbb{R}^d}\left\{ \langle p, q\rangle - h(p) \right\} = \sup_{p\in \R^d}\left\{ \langle p, q\rangle +\mathcal{H}(t,x^*(t), p;w_n^{\bm{\xi}^{(m)}}) \right\}
\end{equation*}
Evaluating $h^*$ at $q = -\dot{x}^*(t)$, we have 
\begin{equation}
\label{eq:hstar-xstar}
    h^*(-\dot{x}^*(t)) = \sup_{p\in \R^d} \left\{ \mathcal{H}(t,x^*(t), p;w_n^{\bm{\xi}^{(m)}}) - \langle p, \dot{x}^*(t)\rangle \right\}. 
\end{equation}
By the Fenchel-Young inequality, for any $p\in \R^d$, 
\begin{equation*}
    h^*(-\dot{x}^*(t)) + h(p) \ge \langle p, -\dot{x}^*(t) \rangle, 
\end{equation*}
with equality holding if and only if $-\dot{x}^*(t) =  -\partial_p \mathcal{H}(t,x^*(t),p;w_n^{\bm{\xi}^{(m)}})$. 
Recalling the state dynamics from Step 1, we have $\dot{x}^*(t) =  \partial_p \mathcal{H}(t,x^*(t),p;w_n^{\bm{\xi}^{(m)}})$. 
Thus, the equality condition is satisfied at $p = p^*(t)$ and we have 
\begin{equation*}
    h^*(-\dot{x}^*(t)) + h(p^*(t)) = \langle p^*(t), -\dot{x}^*(t) \rangle.
\end{equation*}
Rearranging this equality and substituting the definitions of $h$ and \eqref{eq:hstar-xstar} recovers equation~\eqref{eq:equivalence-1}. 

{\bf Step 3.} We now establish a lower bound for the optimal cost by passing to a dual formulation.
Since $g$ is proper convex and continuous by Assumption~\ref{assu:dual-sc-prob}, we express the terminal cost using its convex conjugate 
\begin{equation} \label{eq:conjugate-g}
    g(x^*(T)) = \sup_{p\in \R^d} \left\{ \langle x^*(T), p \rangle - g^*(p) \right\}. 
\end{equation}
Substituting \eqref{eq:conjugate-g} and the identity \eqref{eq:equivalence-1} from Step 2 into the cost equation \eqref{eq:optimal-cost}, we obtain 
\begin{equation*}
    v_n^{(m)}(0,x_0) = \sup_{p\in \R^d} \left\{ \langle x^*(T), p \rangle - g^*(p) \right\} + \int_0^T \sup_{p\in \R^d}\left\{ \mathcal{H}(t,x^*(t), p; w_n^{\bm{\xi}^{(m)}}) - \langle p, \dot{x}^*(t) \rangle \right\} \,\dd t 
\end{equation*}
We now relax the pointwise supremum inside the integral to a supremum over the space of measurable functions $p:[0,T]\to \R^d$: 
\begin{equation} \label{eq:sup-integral-inequality} 
v_n^{(m)}(0,x_0) \ge \sup_{\substack{p:[0,T]\to \R^d \\ \text{measurable}}} \left\{ \langle x^*(T), p(T) \rangle - g^*(p(T)) + \int_0^T \left[ \mathcal{H}(t,x^*(t), p(t); w_n^{\bm{\xi}^{(m)}}) - \langle p(t), \dot{x}^*(t) \rangle\right] \,\dd t \right\}. 
\end{equation}
Here, we make some remark on measurability. The interchange of supremum and integral is justified as follows. The map $(t,x,p,u)\mapsto \langle \tilde{b}(t,x,u;w_n^{\bm{\xi}^m}), p\rangle + \tilde{r}(t,x,p;w_n^{\bm{\xi}^m})$ is continuous by Assumption~\ref{assu:Pontryagin} and thus measurable. Since $U$ is a separable metric space, the infimum defining $\mathcal{H}$ can be taken over a countable dense subset, preserving measurability. Consequently, the map $t\mapsto \sup_{p\in \R^d}\{\mathcal{H}(t, x^*(t), p;w_n^{\bm{\xi}^{(m)}}) - \langle p, \dot{x}^*(t)\rangle\}$ is measurable. Its integrability is guaranteed by the integrability of $\tilde{r}$ and the equality derived in \eqref{eq:r-tilde-by-hamiltonian} and \eqref{eq:equivalence-1}. Moreover, the measurability of $t\mapsto \mathcal{H}(t,x^*(t), p(t); w_n^{\bm{\xi}^{(m)}}) - \langle p(t), \dot{x}^*(t) \rangle$ is ensured since we restrict $p:[0,T]\to \R^d$ to be measurable. 

Next, applying integration by parts to the term involving $\dot{x}^*(t)$ yields 
\begin{equation} \label{eq:optimize-over-all-p}
\begin{aligned}
    v_n^{(m)}(0,x_0) &\ge \sup_{\substack{p:[0,T]\to \R^d \\ \text{measurable}}} \bigg\{ x^*(T), p(T) \rangle - g^*(p(T)) + \int_0^T \mathcal{H}(t,x^*(t), p(t); w_n^{\bm{\xi}^{(m)}})\,\dd t \\
    &\qquad \qquad \qquad - \langle x^*(T), p(T)\rangle + \langle x_0, p(0) \rangle + \int_0^T \langle x^*(t), \dot{p}(t) \rangle \,\dd t \bigg\} \\
    &= \sup_{\substack{p:[0,T]\to \R^d \\ \text{measurable}}} \bigg\{\langle x_0, p(0) \rangle - g^*(p(T)) + \int_0^T \left(\mathcal{H}(t,x^*(t), p(t); w_n^{\bm{\xi}^{(m)}}) + \langle x^*(t), \dot{p}(t) \rangle\right) \,\dd t \bigg\}. 
\end{aligned}
\end{equation}
Then, we intend to optimize over a subset of $\{p:[0,T]\to \R^d\mid p \text{ is measurable}\}$. Define the set $S_p^{x_0}$ of continuous functions $p(\cdot)$ that are solutions to the Hamiltonian system by 
\begin{equation*}
S_p^{x_0} := \left\{
p\in C([0,T]; \mathbb{R}^d) \;\bigg|\; 
\exists x: [0,T] \to \mathbb{R}^d \text{ s.t. }
\begin{array}{l}
\dot{p}(t) = -\partial_x \mathcal{H}(t,x(t), p(t); w_n^{\bm{\xi}^{(m)}}), \\
\dot{x}(t) = \partial_p\mathcal{H}(t,x(t), p(t); w_n^{\bm{\xi}^{(m)}}), \quad x(0) = x_0
\end{array}
\right\}.
\end{equation*}
Here, $p(0)$ is ``free". By Assumption~\ref{assu:existence} (existence of optimal control) and Theorem~\ref{thm:pontryagin}, there exists an optimal trajectory $(x^*, p^*)$ such that $p^* \in S_p^{x_0}$ and thus $S_p^{x_0}$ is nonempty.
Restricting the domain of the supremum in \eqref{eq:optimize-over-all-p} to $S_p^{x_0}$ yields the lower bound: 
\begin{equation} \label{eq:quantity-I}
    v_n^{(m)}(0,x_0) \ge \sup_{p\in S_p^{x_0}} \bigg\{\langle x_0, p(0) \rangle - g^*(p(T)) + \int_0^T \left(\mathcal{H}(t,x^*(t), p(t); w_n^{\bm{\xi}^{(m)}}) + \langle x^*(t), \dot{p}(t) \rangle\right) \,\dd t \bigg\} =: \textrm{I}. 
\end{equation}

{\bf Step 4.} Next, we will show that 
\begin{equation} \label{eq:identity-I-II}
    \textrm{I} \ge \textrm{II} := \sup_{(x,p)\in S_{xp}^{x_0}} \bigg\{\langle x_0, p(0) \rangle - g^*(p(T)) + \int_0^T \left(\mathcal{H}(t,x(t), p(t); w_n^{\bm{\xi}^{(m)}}) + \langle x(t), \dot{p}(t) \rangle\right) \,\dd t \bigg\}, 
\end{equation}
where the admissible set $S_{xp}^{x_0}$ is defined by 
\begin{equation*}
    S_{xp}^{x_0} := \left\{
(x,p)\in C([0,T]; \mathbb{R}^d)^2 \;\bigg|\;
\begin{array}{l}\dot{p}(t) = -\partial_x \mathcal{H}(t,x(t), p(t); w_n^{\bm{\xi}^{(m)}}), \\
\dot{x}(t) = \partial_p\mathcal{H}(t,x(t), p(t); w_n^{\bm{\xi}^{(m)}}),\quad x(0) = x_0
\end{array}\right\}. 
\end{equation*}
For any $(x, p) \in S_{xp}^{x_0}$, it holds that $p\in S_p^{x_0}$. Therefore, to establish \eqref{eq:identity-I-II}, it suffices to prove that the integrand satisfy the pointwise inequality 
\begin{equation} \label{eq:goal-ineq}
    \mathcal{H}(t, x^*(t), p(t); w_n^{\bm{\xi}^{(m)}}) + \langle x^*(t), \dot{p}(t)\rangle \ge \mathcal{H}(t,x(t), p(t); w_n^{\bm{\xi}^{(m)}}) + \langle x(t), \dot{p}(t) \rangle. 
\end{equation}
Define the convex conjugate of $\mathcal{H}$ with respect to $x$ by 
\begin{equation*}
    F(\eta; t, p(t), w_n^{\bm{\xi}^{(m)}}) := \sup_{x\in \R^d} \left\{ \langle \eta, x\rangle - \mathcal{H}(t, x, p(t); w_n^{\bm{\xi}^{(m)}}) \right\}. 
\end{equation*}
By the Fenchel–Young inequality, evaluating the conjugate at $\eta = -\dot{p}(t)$ and the primal function at $x^*(t)$ yields 
\begin{equation} \label{eq:FY-1}
    \mathcal{H}(t, x^*(t), p(t); w_n^{\bm{\xi}^{(m)}}) + F(-\dot{p}(t); t, p(t), w_n^{\bm{\xi}^{(m)}}) \ge \langle x^*(t), -\dot{p}(t)\rangle. 
\end{equation}
Furthermore, the Fenchel-Young inequality becomes an equality if $\eta = \partial_x\mathcal{H}$. Since $(x, p) \in S_{xp}^{x_0}$ implies that $-\dot{p}(t) = \partial_x\mathcal{H}(t, x(t), p(t); w_n^{\bm{\xi}^{(m)}})$, we obtain the identity 
\begin{equation} \label{eq:FY-2}
    \mathcal{H}(t, x(t), p(t); w_n^{\bm{\xi}^{(m)}}) + F(-\dot{p}(t); t, p(t), w_n^{\bm{\xi}^{(m)}}) = \langle x(t), -\dot{p}(t)\rangle. 
\end{equation}
Combining \eqref{eq:FY-1} and \eqref{eq:FY-2} leads to the pointwise inequality \eqref{eq:goal-ineq}, which in turn confirms $\textrm{I} \ge \textrm{II}$.
Therefore, by \eqref{eq:quantity-I} and \eqref{eq:identity-I-II}, we obtain 
\begin{equation} \label{eq:generalized-hopf-one-side}
\begin{aligned}
    v_n^{(m)}(0, x_0) \ge \sup_{p_0\in \R^d}\bigg\{ &\langle x_0, p_0\rangle - g^*(p(T)) + \int_0^T \bigg[ \mathcal{H}(t, x(t), p(t); w_n^{\bm{\xi}^{(m)}}) + \langle x(t), \dot{p}(t)\rangle \bigg]\,\dd t: \\
    & \dot{x}(t) = \partial_p \mathcal{H}(t, x(t), p(t); w_n^{\bm{\xi}^{(m)}}), \quad x(0) = x_0, \\
    & \dot{p}(t) = -\partial_x \mathcal{H}(t,x(t),p(t); w_n^{\bm{\xi}^{(m)}}), \quad p(0)= p_0 \bigg\}. 
\end{aligned}
\end{equation}

{\bf Step 5.} Finally, it remains to show that the inequality in \eqref{eq:generalized-hopf-one-side} is in fact an equality. We use the Fenchel-Young inequality 
\begin{equation*}
    \langle x(T), p(T)\rangle \le g(x(T)) + g^*(p(T)), 
\end{equation*}
where the equality holds only when $p(T) = \nabla g(x(T))$, since $g$ is continuously differentiable by Assumption~\ref{assu:Pontryagin}. By \eqref{eq:Pontryagin-ode}, we have $(x^*, p^*)\in S_{xp}^{x_0}$ and $p^*(T) = \nabla g(x^*(T))$.
Thus, there holds 
\begin{equation} \label{eq:equality-g-g-star}
    \langle x^*(T), p^*(T)\rangle = g(x^*(T)) + g^*(p^*(T)). 
\end{equation}
Using \eqref{eq:generalized-hopf-one-side}, \eqref{eq:equality-g-g-star}, and integration by parts, we obtain 
\begin{equation*}
    \begin{aligned}
        v_n^{(m)}(0,x_0) &\ge \langle x_0, p^*(0)\rangle - g^*(p^*(T)) + \int_0^T \bigg[ \mathcal{H}(t, x^*(t), p^*(t); w_n^{\bm{\xi}^{(m)}}) + \langle x^*(t), \dot{p}^*(t)\rangle \bigg]\,\dd t \\
        &= \langle x_0, p^*(0)\rangle - g(x^*(T)) - \langle x^*(T), p^*(T)\rangle + \int_0^T \bigg[\mathcal{H}(t, x^*(t), p^*(t); w_n^{\bm{\xi}^{(m)}})\,\dd t + \langle x^*(t), \dot{p}^*(t)\rangle \bigg]\,\dd t \\
        &= g(x^*(T)) + \int_0^T \bigg[\mathcal{H}(t, x^*(t), p^*(t); w_n^{\bm{\xi}^{(m)}})\,\dd t + \langle p^*(t), \dot{x}^*(t)\rangle \bigg]\,\dd t \\
        &= v_n^{(m)}(0,x_0),
    \end{aligned}
\end{equation*}
where the last equality holds due to \eqref{eq:optimal-cost} and \eqref{eq:r-tilde-by-hamiltonian}. 
This verifies that \eqref{eq:generalized-hopf-one-side} is an equality and thus completes the proof. 
\end{proof}

\begin{remark}
For clarity of presentation, we formulated Theorem~\ref{thm:generalized-hopf} and its associated assumptions specifically for the problem~\eqref{eq:doc-prob}, as the primary goal of this paper is to develop a dual numerical method. However, the proof also extends to general deterministic optimal control problems provided that an optimal control exists, Pontryagin’s maximum principle holds, and the terminal cost $g$ is proper convex. 
\end{remark}

We summarize the dual approach based on the generalized Hopf formula in Algorithm \ref{alg:dual-generalized-hopf}. 

\begin{algorithm}[htbp!]
\caption{Computing the dual lower bound using the generalized Hopf formula}
\label{alg:dual-generalized-hopf}
\begin{algorithmic}[1]
\REQUIRE The trained neural network approximation $z_\theta$ from any primal approach; a time grid $\mathcal{T}$; number of Monte Carlo samples $M\in \mathbb{N}$; optimization iterations $N_{opt}\in \mathbb{N}$; learning rate $\eta$; the initial state $x_0$ 
\ENSURE The dual lower bound estimate $\hat{v}_{dual} \approx V_{\text{lower}}$ 
\FOR{$m=1, \dots, M$}
    \STATE Generate a $d\times n$-dimensional standard normal random variable $\bm{\xi}^{(m)}$ 
    \STATE Compute $\dot{w}_n(t) = \dot{w}_n^{\bm{\xi}^{(m)}}(t)$ using $\bm{\xi}^{(m)}$ for $t\in \mathcal{T}$ 
        \STATE Initialize $p_0 \in \mathbb{R}^d$ randomly 
        \STATE Initialize  
        \begin{align*}
            \hat{v}_n^{(m)} \leftarrow \mathcal{J}(p_0) &:= \langle x_0, p_0\rangle - g^*(x(T)) 
            \\
            &\qquad + \sum_{t\in \mathcal{T}} \bigg[\mathcal{H}(t, x(t), p(t); w_n^{\bm{\xi}^{(m)}}) - \langle x(t), \partial_x\mathcal{H}(t, x(t), p(t); w_n^{\bm{\xi}^{(m)}})\rangle \bigg]\Delta t,
        \end{align*}
        where $(x, p)$ is solved with the initial condition $(x_0, p_0)$ using an ODE solver 
        \FOR{$\ell = 1, \dots,N_{opt}$}
            \STATE Compute gradient $\nabla_{p_0}\mathcal{J}(p_0)$ by automatic differentiation 
            \STATE Update $\hat{v}_n^{(m)} \leftarrow \mathcal{J}(p_0)$ 
            \STATE Update $p_0 \leftarrow p_0 +\eta \nabla_{p_0}\mathcal{J}(p_0)$  (Gradient ascent)
        \ENDFOR
\ENDFOR
\RETURN $\hat{v}_{dual} = \frac{1}{M}\sum_{m=1}^M \hat{v}_n^{(m)}$. 
\end{algorithmic}
\end{algorithm}

\begin{remark}
In Algorithm~\ref{alg:dual-generalized-hopf}, we solve the maximization problem in the generalized Hopf formula \eqref{eq:generalized-hopf-formula} using gradient ascent, facilitated by automatic differentiation. Importantly, due to the maximization structure of the formula, the resulting value remains a valid lower bound (a true dual bound) even if the optimization procedure only yields a sub-optimal solution. This, together with the upper bound, can be used to assess the quality of approximate solutions. 
\end{remark}

\section{Numerical examples}
\label{sec:numerical-examples}
In this section, we provide numerical experiments to compute the lower and upper bounds of the stochastic control problem~\eqref{eq:sc-prob}. The algorithms are implemented in Python, where primal methods are implemented using the machine learning library PyTorch and dual methods are implemented using the Jax library for computational efficiency. The codes for Algorithm~\ref{alg:dual-pontryagin} and ~\ref{alg:dual-generalized-hopf} can be found in the public GitHub repository \href{https://github.com/jiefeiy/dual-stochastic-optimal-control/tree/main}{https://github.com/jiefeiy/dual-stochastic-optimal-control/tree/main}. 
To justify Assumption~\ref{assu:Pontryagin}, the model $z_\theta(t,\cdot)$ obtained from any primal method should be continuously differentiable. Thus, in numerical experiments, we adopt $\tanh$ activation function instead of the popular choice \textrm{ReLU} in neural networks. 

\subsection{Linear quadratic stochastic control}
We consider a $d$-dimensional linear quadratic problem. Let $T=1$, $x_0 = (0,\dots,0)^\top$, $A = \op{diag}([1, 4/25,4,\dots,4])$, and define 
\begin{equation*}
    \begin{aligned}
        r(x,u) &= |u|^2, \quad g(x) = \frac{1}{2}(x^\top A x + 1), \\
        b(x,u) &= 2u, \quad \sigma(x) = \sqrt{2}. 
    \end{aligned}
\end{equation*}
The associated value function satisfies the HJB equation 
\begin{equation*}
\begin{aligned}
    \frac{\partial V}{\partial t} + \Delta_x V - |\nabla_x V|^2 &= 0,\quad (t,x)\in [0,T)\times \R^d, \\
    V(T,x) &= g(x), \quad x\in \R^d.
\end{aligned}
\end{equation*}
By the Hopf-Cole transformation, this HJB equation has an explicit solution. In particular, the value function at the initial time $t=0$ is given by 
\begin{equation*}
    V(0, x_0) = -\ln \bE\left[ \exp\left( -g(x_0 + \sqrt{2}W_T) \right) \right],
\end{equation*}
where $(W_t)_{t\ge 0}$ is a standard $d$-dimensional Brownian motion. This closed-form expression will serve as a benchmark against which we compare the lower and upper bounds produced by the proposed primal-dual approaches. 

We implement the proposed primal-dual algorithms across dimensions $d\in \{2,3,5,10\}$. The results are summarized in Table~\ref{tab:bounds-lq-dim-Pontryagin}. The lower bounds are computed using Pontryagin's maximum principle and the generalized Hopf formula. For both dual approaches, we employ a time discretization of $N_T=200$, Brownian motion approximation with $n=32$ terms, the number of Monte Carlo samples $M=500$, together with a fourth-order Runge--Kutta ODE solver. The upper bounds are obtained via the deep BSDE method using a three-layer feedforward neural network with a hidden dimension of 64 and a $\tanh$ activation function. The evaluation is performed using $N_{T,\mathrm{upper}}=400$ time steps and a sample size of $M_{\mathrm{upper}}=2^{18}$.

\begin{table}[htbp!]
    \centering
    \begin{tabular}{ccccc}
        \hline 
        $d$ & $V_{\text{lower}}$ (Pontryagin) & $V_{\text{lower}}$ (Hopf) & $V(0,x_0)$ & $V_{\text{upper}}$ (Deep BSDE) \\
        \hline 
        2 & $1.1858 \pm 0.0018$ & $1.1889 \pm 0.0016$ & 1.1881 & $1.1891 \pm 0.0026$  \\
        3 & $2.2579 \pm 0.0122$ & $2.2845 \pm 0.0113$ & 2.2868 & $2.3018 \pm 0.0052$  \\
        5 & $4.3780 \pm 0.0224$ & $4.4513 \pm 0.0207$ & 4.4841 & $4.5286 \pm 0.0084$  \\
        10 & $9.5273 \pm 0.0596$ & $10.0452 \pm 0.1270$ & 9.9759 & $10.3618 \pm 0.0143$ \\
        \hline 
    \end{tabular}
    \caption{Numerical comparison of lower and upper bounds for $V(0,x_0)$ in different dimensions $d$. Reported values include $95\%$ confidence intervals.}
    \label{tab:bounds-lq-dim-Pontryagin}
\end{table}

Next we test the primal-dual algorithm for different initial states $x_0=(x_0^{(1)}, 0)$ with $x_0^{(1)} \in \{-2, -1.5, ..., 2\}$, see Figure~\ref{fig:lq_figure_x0}. The lower bounds are computed using the generalized Hopf formula, maintaining the same hyperparameters as in the previous experiment. The upper bounds are obtained via the deep BSDE method using a three-layer feedforward neural network with a hidden dimension of $64$. For this set of experiments, we use $N_{T,\mathrm{upper}}=200$ time steps and $M_{\mathrm{upper}}=2^{17}$ samples for the upper bound evaluation. In Figure~\ref{fig:lq_figure_x0}(a), all exact values fall within the corresponding $95\%$ confidence intervals of the estimated bounds. Figure~\ref{fig:lq_figure_x0}(b) plots the relative duality gap $\epsilon$. We observe that the relative error remains roughly of order $10^{-3}$ across varying initial states.

\begin{figure}[htbp!]
    \centering
    \begin{subfigure}{.45\textwidth}
        \centering
        \includegraphics[width=\linewidth]{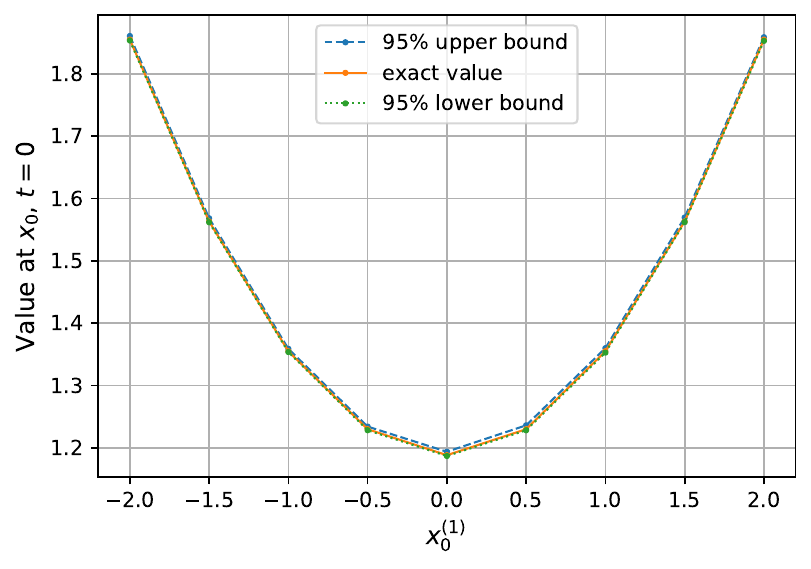}
        \caption{Lower and upper bounds of $V(0,x_0)$}
    \end{subfigure}
    \begin{subfigure}{.45\textwidth}
        \centering
        \includegraphics[width=\linewidth]{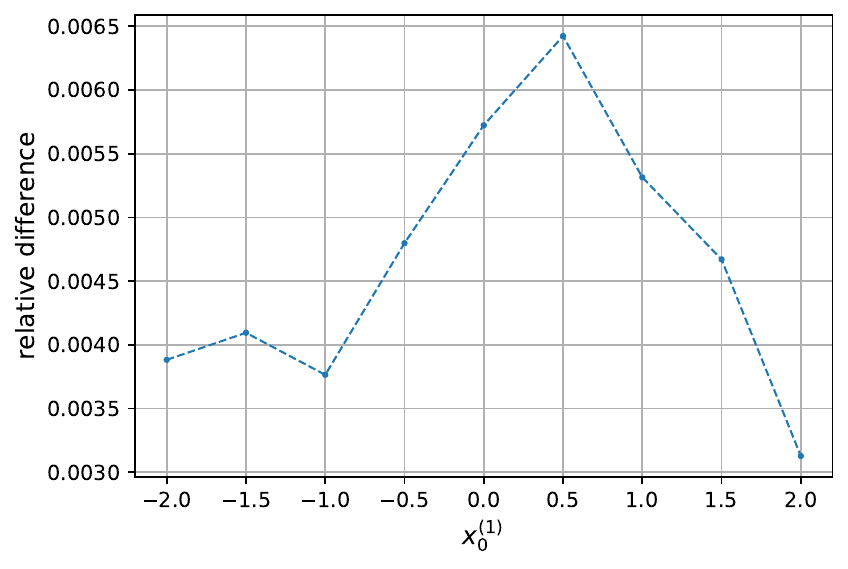}
        \caption{Relative difference $\epsilon = \frac{V_{\text{upper}} - \hat{v}_{dual}}{V(0,x_0)}$}
    \end{subfigure}
    \caption{$95\%$ confidence intervals' bounds and relative difference of $V(0,x_0)$ with $x_0=(x_0^{(1)},0)$.}
    \label{fig:lq_figure_x0}
\end{figure}

\subsection{Ornstein-Uhlenbeck dynamics with linear costs}
Consider a controlled Ornstein–Uhlenbeck process with linear–quadratic costs, as studied in~\cite[Section 6.2]{nusken2021solving}. Let the running cost, terminal cost, drift, and diffusion coefficients be 
\begin{equation*}
\begin{aligned}
    r(x,u) = \frac{1}{2}|u|^2, \quad g(x) = \gamma \cdot x, \\
    b(x,u) = Ax + Bu, \quad \sigma(x) = B, 
\end{aligned}
\end{equation*}
where $\gamma \in \R^d$ is a fixed vector, and $A,B\in \R^{d\times d}$ are constant matrices. The associated value function satisfies the HJB PDE 
\begin{equation*}
    \partial_t V +\frac{1}{2}\op{Tr}(BB^\top \op{Hess}_x V) + \inf_u \left\{ \langle \nabla_x V, Ax + Bu\rangle + \frac{1}{2}|u|^2\right\} = 0 
\end{equation*}
with terminal condition $V(T, x) = \gamma \cdot x$. The minimization over $u$ yields the optimal feedback control $u^*_t = -B^\top\nabla_xV$ and the HJB equation can be written equivalently as 
\begin{equation*}
    \partial_t V +\frac{1}{2}\op{Tr}(BB^\top \op{Hess}_x V) +\langle \nabla_x V, Ax\rangle - \frac{1}{2}|B^\top \nabla_x V|^2 = 0. 
\end{equation*}

In numerical experiments, we take $A = -I_{d\times d} + (c_{ij}^A)_{1\le i,j\le d}$, $B = I_{d\times d} +(c_{i,j}^B)_{1\le i,j\le d}$ with $c_{ij}^A = 0.12$, $c_{ij}^B = 0.1$, and set $\gamma = (1,\dots,1)^\top$, $T=1$. 
For this choice of parameters, the value function admits the explicit expression 
\begin{equation*}
    V(0,x_0) = e^{\lambda_A} \sum_{i=1}^d x_0^{(i)} - \frac{d \lambda_B^2}{4 \lambda_A}(e^{2\lambda_A} - 1), \quad \text{ with } \lambda_A = 0.12d - 1 , \lambda_B = 1+0.1d, 
\end{equation*}
which we use as a reference solution. 

We solve the problem numerically using a primal–dual algorithm based on Pontryagin’s maximum principle; the results are reported in Table~\ref{tab:bounds-ou-Pontryagin}. To apply the generalized Hopf formula, we first compute the convex conjugate of the terminal cost: 
\begin{equation*}
    g^*(p) = \sup_{x\in \R^d} \left\{ \langle p - \gamma, x\rangle \right\} = \begin{cases}
        0, &p = \gamma, \\
        +\infty, &p \ne \gamma. 
    \end{cases}
\end{equation*}
Consequently, the integral term in the generalized Hopf formula~\eqref{eq:generalized-hopf-formula} is finite only when $p(T)=\gamma$. Maximizing the objective therefore enforces the terminal condition $p(T)=\gamma$, which coincides with the transversality condition $p(T)=\nabla g(x)=\gamma$ in Pontryagin’s maximum principle. This confirms the consistency between the generalized Hopf formulation and the Pontryagin-based approach. 

\begin{table}[htbp!]
    \centering
    \begin{tabular}{ccccc}
        \hline 
        $d$ & $V_{\text{lower}}$ (Pontryagin) & $V(0,x_0)$ & $V_{\text{upper}}$ (Deep BSDE) & $95\%$ confidence interval \\
        \hline 
        2 & $0.1914 \pm 0.0015$ & $0.1952$ & $0.1899 \pm 0.0066$ & $[0.1900, 0.1965]$ \\
        3 & $0.1499 \pm 0.0019$ & $0.1521$ & $0.1500 \pm 0.0092$ & $[0.1481, 0.1592]$ \\
        5 & $-0.5249 \pm 0.0021$ & $-0.5203$ & $-0.5168 \pm 0.0151$ & $[-0.5270, -0.5017]$ \\
        10 & $-12.6287 \pm 0.1594$ & $-12.3772$ & $-11.7802 \pm 0.0359$ & $[-12.7880, -11.7444]$ \\
        \hline 
    \end{tabular}
    \caption{Numerical comparison of lower and upper bounds for $V(0,x_0)$ with $x_0 = [1, \dots, 1]^\top$}
    \label{tab:bounds-ou-Pontryagin}
\end{table}

For experiments reported in Table~\ref{tab:bounds-ou-Pontryagin}, the upper bounds are obtained via the deep BSDE method using a three-layer feedforward neural network with a hidden dimension of 64 and a $\tanh$ activation function, while the lower bounds are computed with $N_T=400, n=32, M=500$. 
Across all dimensions, the true value falls within the $95\%$ confidence intervals. For lower dimensions ($d=2, 3$), we observe that the empirical mean of the Deep BSDE upper bound is slightly below the true value due to statistical noise. The duality gap widens slightly as the state dimension increases. Nonetheless, the bounds remain tight enough to provide a meaningful certificate of near-optimality even in 10-dimensional space. 
Regarding computational efficiency, for the high-dimensional case ($d=10$), we trained the primal method in 995 seconds and the dual method completed execution in 152 seconds performed on a laptop with a $2.50$ GHz Intel Core i9-12900H processor and $32$ GB of RAM. 

\subsection{Aiyagari's growth model in economics}
In this section, we compute the dual bounds combined with deep actor-critic method for the Aiyagari's growth model in economics~\cite{zhou2024solving}. The agent aims to control the dynamic to maximize the expected utility 
\begin{equation*}
    \max_{u}\bE\left[ \int_0^T (U(u_t) - \bar{r}(Z_t))\,\dd t + A_T - \bar{g}(Z_T) \right],
\end{equation*}
where the controlled dynamics are 
\begin{equation*}
    \begin{aligned}
        \dd Z_t &= -(Z_t - 1)\,\dd t + \sigma_z\,\dd W_t^{(1)}, \\
        \dd A_t &= ((1-\alpha)Z_t + (\alpha - \delta)A_t - u_t) \,\dd t + \sigma_a A_t \,\dd W_t^{(2)}. 
    \end{aligned}
\end{equation*}
We take the same parameters as in~\cite[Section 6.2]{zhou2024solving}. Let $U(u) = \log(u)$, $\bar{g}(z) = 0.2\exp(0.2z)$, $\alpha = \delta = 0.05$, $\sigma_z = 1.0$, $\sigma_a = 0.1$, $T=0.1$. 

To be consistent with the problem~\eqref{eq:sc-prob}, taking a change of sign, we consider the equivalent minimization problem: 
\begin{equation*}
    V(0,Z_0, A_0) =\min_u \bE\left[ \int_0^T (-U(u_t) + \bar{r}(Z_t))\,\dd t + \bar{g}(Z_T) - A_t \right].
\end{equation*}
Let $(Z_t, A_t) = (X_t^{(1)}, X_t^{(2)})$. The drift, diffusion, running cost, and terminal cost function are given by 
\begin{equation*}
    b(x,u) = \begin{bmatrix}
        1-x^{(1)} \\
        0.95x^{(1)} - u
    \end{bmatrix}, \quad \sigma(x) = \begin{bmatrix}
        1 & 0 \\
        0 & 0.1x^{(2)}
    \end{bmatrix}.
\end{equation*}
\begin{equation*}
    r(x,u) = -\log(u) + \bar{r}(x^{(1)}), \quad g(x) = \bar{g}(x^{(1)})- x^{(2)}. 
\end{equation*}
where 
\begin{equation*}
\begin{aligned}
    \bar{r}(x^{(1)}) &= (x^{(1)} - 1 - 0.1)\exp(0.2 x^{(1)})(0.2)(0.2) - 1 + 0.95 x^{(1)} \\
    &= 0.04(x^{(1)} - 1.1)\exp(0.2 x^{(1)}) - 1 + 0.95 x^{(1)}. 
\end{aligned}
\end{equation*}

In this example, we implement the deep actor-critic method~\cite{zhou2024solving} as the primal approach and compute the dual lower bound based on the generalized Hopf formula; the results are reported in Table~\ref{tab:bounds-aiyagari}. The lower bounds are computed using $M=500$ samples. The upper bounds are obtained by the deep actor-critic method and evaluated using $M_{\text{upper}} = 10000$ samples and $N_{T,\text{upper}} = 400$ time steps. 

\begin{table}[htbp!]
    \centering
    \begin{tabular}{ccccc}
        \hline 
        $(Z_0, A_0)$ & $V_{\text{lower}}$ (Hopf) & $V(0, Z_0, A_0)$ & $V_{\text{upper}}$ (Actor-Critic) & $95\%$ confidence interval \\
        \hline 
        $(1.0, 0.5)$ & $-0.2568 \pm 0.0014$ & $-0.2557$ & $-0.2555 \pm 0.0004$ & $[-0.2582, -0.2551]$ \\
        $(1.0, 1.0)$ & $-0.7565 \pm 0.0007$ & $-0.7557$ & $-0.7556 \pm 0.0007$ & $[-0.7572, -0.7549]$ \\
        $(1.0, 1.5)$ & $-1.2579 \pm 0.0015$ & $-1.2557$ & $-1.2555 \pm 0.0010$ & $[-1.2594, -1.2545]$  \\
        $(0.25, 1.0)$ & $-0.7909 \pm 0.0021$ & $-0.7897$ & $-0.7895 \pm 0.0007$ & $[-0.7930, -0.7888]$ \\
        $(0.75, 0.5)$ & $-0.7681 \pm 0.0009$ & $-0.7676$ & $-0.7673 \pm 0.0007$ & $[-0.7690, -0.7666]$ \\
        $(1.25, 0.5)$ & $-0.7442 \pm 0.0008$ & $-0.7432$ & $-0.7428 \pm 0.0007$ & $[-0.7450, -0.7421]$ \\
        \hline 
    \end{tabular}
    \caption{Numerical comparison of lower and upper bounds for $V(0,Z_0,A_0)$ with different $(Z_0,A_0)$.}
    \label{tab:bounds-aiyagari}
\end{table}

In Table~\ref{tab:bounds-aiyagari}, the reference value $V(0, Z_0, A_0)$ falls within $95\%$ confidence intervals, which validates the numerical reliability of the dual approach in continuous-time stochastic control problems. The gap between the primal upper bounds ($V_{\text{upper}}$) generated by the deep actor-critic method and the dual lower bounds ($V_{\text{lower}}$) computed via the generalized Hopf formula is narrow across all tested initial states. This small duality gap indicates the near-optimality of the learned control policies.

\section{Conclusion} \label{sec:conclusion}
Deep learning-based methods have emerged as powerful tools for solving stochastic optimal control problems, particularly in high-dimensional settings. However, quantifying the approximation errors inherent in these neural network architectures remains a significant challenge. This work addresses this issue by leveraging primal and dual formulations to compute reliable upper and lower bounds for the optimal value. The tightness of these bounds provides a practical indicator of solution accuracy, offering a robust alternative to the current lack of global theoretical convergence guarantees for deep learning solvers. 

Beyond the numerical construction of these dual bounds, we provide a rigorous proof of the generalized Hopf formula, resolving the conjecture in~\cite{chow2019algorithm} under mild conditions, namely the existence of an optimal control and the standard assumptions required for Pontryagin's maximum principle. This result establishes a formal theoretical foundation for developing curse-of-dimensionality-free algorithms to solve deterministic optimal control problems and state-dependent Hamilton--Jacobi equations. 

A key advantage of the proposed dual approaches is their flexibility: they are naturally compatible with any primal method capable of approximating the gradient-diffusion term $\nabla_xV(t,x)^\top \sigma(x)$. In this study, we validated this synergy using the deep BSDE~\cite{E2017deepbsde} and deep actor-critic~\cite{zhou2024solving} methods. A natural next step is to evaluate the robustness and efficiency of these dual approaches when integrated with a broader class of numerical solvers for stochastic control. 

Furthermore, extending this primal--dual framework to mean field games or mean field control problems represents a promising research direction. In such settings, deriving tight, computable error bounds could significantly enhance the reliability of deep learning algorithms applied to complex, large-population interactions governed by McKean--Vlasov dynamics.

\section*{Acknowledgements}
The authors would like to thank Prof. Xiaolu Tan for helpful discussions at an early stage of this project, in particular for suggestions related to Theorem~\ref{thm:spde}. 

\bibliographystyle{siam}
\bibliography{references}
\end{document}